\documentclass[final]{siamltex}

\usepackage{amsmath,amsfonts,amssymb}
\usepackage{mathrsfs,seceqn}
\usepackage[sort&compress,numbers]{natbib}
\usepackage{graphicx,float,color}

\newtheorem{remark}[theorem]{Remark}
\newcommand{\Hm}{X_\mathbf{m}}
\newcommand{\intd}{\int_\Omega}
\newcommand{\wto}{\rightharpoonup}

\newcommand{\R}{\mathbb{R}}
\newcommand{\p}{\partial}

\newcommand{\ds}{\displaystyle}
\newcommand{\pot}{{\phi_\eps}}
\newcommand{\potp}{{\phi'_\eps}}
\newcommand{\Pot}{\Phi}
\newcommand{\PotG}{W}
\newcommand{\potG}{{W_\eps}}
\newcommand{\phim}{|\Phi(1)|}

\newcommand{\Real}{\mathbb{R}}

\newcommand{\intdx}{\ {\rm d}x}
\newcommand{\dom}{{\Omega}}
\newcommand{\Ldom}{}
\newcommand{\zdiff}{h}
\newcommand{\eps}{\varepsilon}

\newcommand{\lb}{\label}

\newcommand{\go}{\rightarrow}

\newcommand{\la}{\lambda}

\newcommand{\pt}{\partial_t}

\newcommand{\px}{\partial_x}

\newcommand{\pxx}{\partial_{xx}}

\newcommand{\pxxx}{\partial_{xxx}}
\newcommand{\ee}{\end{equation}}
\newcommand{\be}{\begin{equation}}
\newcommand{\bea}{\begin{eqnarray}}
\newcommand{\eea}{\end{eqnarray}}
\newcommand{\sbea}{\begin{subequations}\begin{eqnarray}}
\newcommand{\seea}{\end{eqnarray}\end{subequations}} 
\newcommand{\ees}{\end{equation*}}
\newcommand{\bes}{\begin{equation*}}
\newcommand{\beas}{\begin{eqnarray*}}
\newcommand{\eeas}{\end{eqnarray*}}
\newcommand{\nn}{\nonumber}
\newcommand{\rf}[1]{(\ref{#1})}

\newcommand{\mR}{\mathbb{R}\,}

\begin{document}

\title{Stationary solutions of liquid two-layer thin film models\thanks{Part of this work was supported
by DFG priority program SPP 1506 \textit{Transport Processes at Fluidic Interfaces}}}

\author{S.~Jachalski\footnotemark[2] \and
Robert Huth\footnotemark[2] \and
Georgy Kitavtsev\footnotemark[3] \and
Dirk Peschka\footnotemark[2] \and 
Barbara Wagner\footnotemark[4]
}

\maketitle
\begin{keywords} 
thin films, gamma-convergence, matched asymptotics, free boundaries
\end{keywords}

\begin{AMS}
76Dxx, 76Txx, 35B40, 35C20, 49Jxx
\end{AMS}

\pagestyle{myheadings}
\thispagestyle{plain}
\markboth{Jachalski et al.}{Stationary solutions of liquid two-layer films}

\renewcommand{\thefootnote}{\fnsymbol{footnote}}
\footnotetext[2]{Weierstrass Institute, Mohrenstra{\ss}e 39, 10117 Berlin, Germany}
\footnotetext[3]{Max Planck Institute for Mathematics in the Sciences, Inselstra{\ss}e 22, 04103 Leipzig, Germany}
\footnotetext[4]{Technische Universit\"at Berlin, Institute of Mathematics, Stra{\ss}e des 17. Juni 136, 10623 Berlin, Germany}

\begin{abstract}
We investigate stationary solutions of a thin-film model for liquid two-layer  flows in an energetic formulation that is motivated by its gradient flow structure. The goal is to achieve a rigorous understanding of the contact-angle conditions for such two-layer systems. We pursue this by investigating a  corresponding energy that favors the upper liquid to dewet from the lower liquid substrate, leaving behind a layer of thickness $h_*$. After proving existence of stationary solutions for the resulting system of thin-film equations we focus on the limit $h_*\to 0$ via matched asymptotic analysis. This yields a  corresponding sharp-interface model and a matched asymptotic solution that includes logarithmic switch-back terms. We compare this with results obtained using $\Gamma$-convergence, where we establish existence and uniqueness of energetic minimizers in that limit.
\end{abstract}
\section{Introduction} 

Understanding stability and dewetting behaviour of thin liquid films coating a solid or a liquid substrate is important in many technological applications and natural phenomena on the micro- to nano scale. They range from tear films of the human eye to organic photovoltaics to numerous applications in the polymer based semiconductor industry.

Typical film thicknesses for these applications may range from tens to hundreds of nanometers and, depending on the material composition, may be susceptable to rupture and formation of holes due to intermolecular forces. Such rupture processes typically initiate complex dewetting scenarios, where holes grow further and their trailing rims merge into polygonal networks which eventually decay into patterns of droplets, that evolve on a slow time scale towards a global minimal energy state. 

The present study focusses on liquid substrates, that energetically favor an interface with the underlying solid. In this case the stages of the dewetting process for the upper liquid proceed to some extend in parallel to those exhibited during dewetting of a liquid film from a solid substrate. The latter system has been investigated much more intensely in recent decades, both, experimentally and theoretically. Examples of the complex pattern formation can be found in Sharma et al. \cite{SR96} or Seemann et al. \cite{SHJ01b} and further experimental and theoretical investigations in numerous references in the recent reviews by Craster \& Matar \cite{CO09} and Herminghaus et al. \cite{HBS08}. 

For liquid-liquid dewetting, experimental studies depicting some of these dewetting stages have been conducted by various groups such as by Segalman \& Green \cite{segalman1999dynamics}, Lambooy et al. \cite{lambooy1996dewetting}, Slep et al. \cite{slep00} or Wang et al. \cite{wang2001dewetting} for the standard system of liquid polystyrene (PS) on a liquid polymethylmethacrylate(PMMA) substrate. They include investigations of rupture and hole growth, dewetting dynamics and equilibrium contact angles the liquid droplets make with the underlying liquid layer, where now the contact line is fixed by two angle conditions instead of one, i.e. Youngs  law is replaced by the Neumann triangle construction \cite{neumann1894vorlesung}. Following the pioneering study by  Brochard-Wyart et al.~\cite{brochard1993liquid}, where various dewetting regimes are derived and analysed, stability of liquid-liquid systems were investigated by Danov et al.~\cite{danov1998stability}, Pototsky et al.~\cite{Pototsky2004Alternative}, Golovin \& Fisher \cite{fisher2005nonlinear}. Stationary states and the dynamics towards stationary states were studied by Pototsky et al. \cite{pototsky2005morphology}, by Craster \& Matar \cite{craster2006dynamics} and by Bandyopadhyay \& Sharma \cite{bandyopadhyay2006nonlinear} and for the case of gravity-driven liquid droplets on a inclined liquid substrate by  Kriegsmann \& Miksis \cite{kriegsmann2003steady}. 

Interestingly, direct quantitative comparisons of theoretical with experimental results, in particular on micro- and nano-scale, regarding for example the morphology of the interfaces, as performed in Kostourou et al. \cite{kostourou2010interface}, or equilibrium values of the Neumann triangle, as discussed in \cite{slep00}, still leave many issues in need to be explained, such as  the dependency of the morphology of the interfaces on the rheology of the liquids, their layer thicknesses or material parameters. On the other side, even for the simplest mathematical models of Newtonian two-layer liquid systems, mathematical theory is still largely open and this is the main focus of the present study. 

Here, we are guided by the many similarities to dewetting from a solid substrate, and expect that some of the mathematical analysis developed for liquid films dewetting from a solid substrate can be carried over to liquid substrates. In particular, we aim to extend the existing theory for liquid droplets on a solid substrate to the situation on a liquid substrate.

As a starting point we recall the work by Bertozzi et al. \cite{bertozzi2001dewetting} regarding stationary states and coarsening of droplets on solid substrates, where they showed existence of smooth global solutions for positive data with bounded energy for the no-slip lubrication equation. In addition they prove existence of global minimizers and determine a family of positive periodic solutions for admissible intermolecular potentials consisting of long-range attractive and short range repulsive contribution, and investigate their linear stability. 
Further extensions were given in Laugesen \& Pugh \cite{laugesen2000linear}, where linear stability of stationary solutions for the thin film equation with Neumann boundary conditions or periodic boundary conditions was investigated. 
Extensions of the existence theory to thin film equations accounting for slip at the liquid-solid interface were given in Rump et al.~\cite{otto2007coarsening} and Kitavtsev et al.~\cite{KRWP11}. Convergence to stationary
solutions of the one-dimensional thin-film equation and the number of
stationary states was recently investigated by Zhang \cite{zhang2009counting}. 

The extension of the existence theory to two-layer liquid systems is given in Section 3, after the formulation of the problem. With the appropriate energy functional for the two-layer system of coupled thin-film equations for the interfacial heights $h_1$ and $h_2$ (see sketch in figure \ref{fig:sketch}) together with Neumann boundary conditions, we show existence of smooth stationary solutions as well as existence of a global minimizer for the steady state problem. 

Admissible intermolecular potentials for the liquid-liquid system are of the form

\begin{align}
\phi(h_2-h_1)=\frac{\phi_*}{\ell-n}\left[\ell\left(\frac{h_*}{h_2-h_1}\right)^{n}
-n\left(\frac{h_*}{h_2-h_1}\right)^{\ell}\right],
\end{align}
where $h_1$ is the height of the liquid-liquid interface, $h_2$ the height of the free surface and its minimal value $\phi_*<0$ is attained at $h_*$. We note that such potentials are widely used in the literature, see e.g. the review \cite{ODB97}.  A choice that is related to the standard Lennard-Jones potential and typically used in experiments is $(n,\ell)=(2,8)$, see e.g. \cite{SHJ01b}.

\begin{figure}[H]
\centering
\includegraphics*[width=0.7\textwidth]{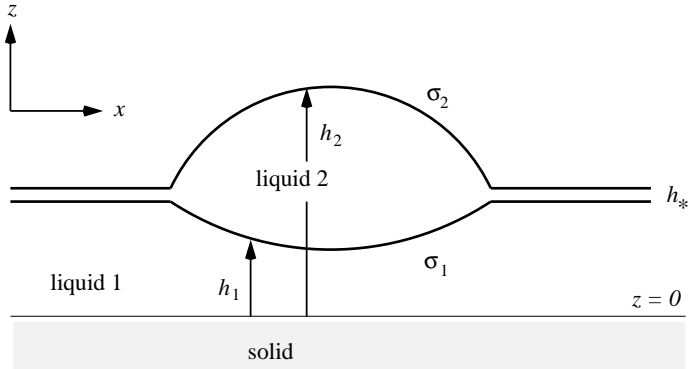}
\caption{ Sketch of liquid droplet with surface tension $\sigma_2$ between air and liquid\,$_2$ on top of a \hspace*{2cm} liquid layer with interfacial  tension $\sigma_1$ between liquid\,$_2$ and liquid\,$_1$.}
\label{fig:sketch}
\end{figure}

The Neumann triangle construction for contact angles at a triple junction is thereby replaced by properties of approximate contact angles  resulting from the particular structure of the surface free energy $\phi$. Starting from this energetic formulation we seek to understand in Section 4 and 5 how the Neumann triangle construction is attained as a limit $h^*\to 0$. This in mind, we first derive in Section 4 a sharp-interface model in the limit of  $h^*\to 0$ using matched asymptotic analysis. This yields the appropriate expression for the equilibrium contact angles of these droplet solutions and as a result the corresponding Neumann triangle. 

We find that, while the equilibrium contact angle is easy to obtain, as expected, the complete asymptotic argument needs to include logarithmic switch-back terms. We note, in retrospect, since equilibrium droplet solutions for solid substrates can be considered as limiting cases for liquid lenses, similar terms should also appear in the matched asymptotic derivation for that problem. 

Finally, in Section 5 we rigorously show existence and uniqueness of the limit $h_*\to 0$ within the framework of $\Gamma$-convergence. We obtain a sharp-interface problem for which we compute the Euler-Lagrange equations, from which one can immediately read off the contact angles. Existence and uniqueness of minimizers is shown here using a rearrangement inequality.


\section{Formulation}
We consider a layered system of two immiscible Newtonian liquids with negative spreading coefficient $\phi_*$. We assume the layered system lives in the two phases $\Omega_1$ and $\Omega_2$ defined by 
\begin{subequations}
\begin{align}
\Omega_1(t)&=\{(x,y,z)\in\Real^3:0 \le z < {h_1}(t,x,y)\},\\
\Omega_2(t)&=\{(x,y,z)\in\Real^3:{h_1}(t,x,y) \le z < {h_2}(t,x,y)\},
\end{align}
\end{subequations}
for all $t>0$ as sketched (in 2D) in figure~\ref{fig:sketch}. Typical applications use liquids such as PMMA for the liquid substrate $\Omega_1$ and PS as the upper liquid $\Omega_2$, both on a scale where gravity can be neglected and, for this study, unentangled and density matched. These simplifying assumptions allow us that the flow of the viscous and incompressible liquids in each phase $\Omega_i$ ($i=1,2$) is governed by the Stokes equation and the continuity equation: 
\begin{subequations}
\begin{align}
-\nabla\cdot\bigl(-p_i\mathbb{I}+\mu_i (\nabla\mathbf{u}_i+\nabla\mathbf{u}_i^\top)\bigr)&=\mathbf{f}_i,\\
\nabla\cdot\mathbf{u}_i&=0, 
\end{align}
\end{subequations}
together with a kinematic condition at each free boundaries $z=h_i$, i.e. 
\be
(\mathbf{e}_z\partial_t h_i - \mathbf{u}_i)\cdot\mathbf{n}_i=0. 
\ee 
Here, $\mathbf{n}_i$ denotes the outer normal.

At the solid substrate a no-slip and an impermeability condition are imposed. At the liquid-liquid interface the velocity is continuous, i.e. $\mathbf{u}_2=\mathbf{u}_1$, and the tangential stress is continuous and the normal stress makes a jump proportional to the mean curvature $[[-p_i\mathbb{I}+\mu_i (\nabla\mathbf{u}_i+\nabla\mathbf{u}_i^\top)]]_{1,2}\mathbf{n}_1=2\sigma_{1,2}\kappa\mathbf{n}_1$. The dewetting process is driven by the intermolecular potential of the upper liquid layer, i.e. $\mathbf{f}_2=-\nabla\phi$. We assume that the thickness of the lower layer is sufficiently thick, so other contributions to the intermolecular potential are negligible, i.e. $\mathbf{f}_1=0$.

In addition we assume that the the ratio $\eps_{\ell}=H/L$ of the vertical to horizontal length scales is always small and the two-layer system can be approximated by a thin-film model. We denote by $L$ the length scale of the typical horizontal width and $H=h_{\rm max}$ the maximum of the difference $h_2-h_1$. 

Detailed derivations of thin-film models for liquid-liquid systems are given for example in \cite{kriegsmann2003steady}, \cite{Pototsky2004Alternative} or recently, by accountung for interfacial slip, in \cite{JMPW11}. For the convenience of the reader we note here the choice of non-dimensional variable and parameters, where the non-dimensional horizontal and vertical coordinates are given by $\tilde x=x/L$, $\tilde y=y/L$ and $\tilde z=z/h_{\rm max}$, respectively, and $\tilde h=h/h_{\rm max}$. The non-dimensional pressures $\tilde p_i=p_i/P$ and the derivative of the non-dimensional intermolecular potential $\phi'_{\eps}=\phi'/P$ are scaled such that $P=\phi_*n\ell/\bigl((\ell-n)h_{\rm max}\bigr)$ and hence 
\begin{align}
\label{phi}
\pot'(\tilde{h})=\frac{1}{\eps}\left[\left(\frac{\eps}{\tilde{h}}\right)^{n+1}
-\left(\frac{\eps}{\tilde{h}}\right)^{\ell+1}\right]
\end{align}
attains the minimal value $\min\pot=-1$ at $\tilde h=\eps$, where
$\eps=h_*/h_{\rm max}$ is the non-dimensional thickness of the ultra-thin film.
For the dynamic problem, the velocities are scaled with the characteristic
horizontal velocity of the dewetting upper layer, such that for the
non-dimensional horizontal and vertical velocities we have $\tilde u= u/U$,
$\tilde v=v/U$ and $\tilde w=w/\eps_{\ell}U$, respectively, with
$U=\eps_{\ell}^3\sigma_2/\mu_2 $, and the non-dimensional time $\tilde t=(U/L)
t$.

For the remainder of this paper it is convenient to introduce the ratios
$\sigma=\sigma_1/\sigma_2$ and $\mu=\mu_1/\mu_2$ of surface tensions and
viscosities, respectively, and drop all the ``$\sim $''.
Within this approximation the normal and tangential stress conditions at the
free surface $h_2$ and at the free liquid-liquid interface $h_1$ yield the
expressions for the pressures $p_2$ and $p_1$ 
\begin{align}\label{pressures}
p_1=-\sigma\Delta h_1 -\ds\potp(h_2-h_1),\quad\qquad p_2=-\Delta h_2 +\potp(h_2-h_1),    
\end{align}
respectively.
Under these assumptions the coupled system of nonlinear fourth order partial differential equations for the profiles of the free surfaces $h_1$ and $h_2$ takes the form  
\begin{align}
\label{sys}
\pt {\bf h}&=\nabla\cdot\left({ Q}\cdot \nabla{\bf p}\right),
\end{align}
where ${\bf h}=(h_1,h_2)^\top$ is the vector of liquid-liquid interface profile 
and liquid-air surface profile. The components of the vector ${\bf
p}=(p_1,p_2)^\top$ are the interfacial pressures given in \rf{pressures}. 
The gradient of the pressure vector is multiplied by the mobility
matrix $Q$ which is given by
\begin{align}
\label{mobmatrix}
{Q}=\frac{1}{\mu}\left[\begin{array}{cccc}
\ds\frac{h_1^3}{3}& \;\ds\frac{h_1^3}{3} + \frac{h_1^2(h_2-h_1)}{2}\\ \\
\ds\frac{h_1^3}{3} + \frac{h_1^2(h_2-h_1)}{2}& \quad\; \ds\frac{\mu}{3}(h_2-h_1)^3 + h_1h_2(h_2-h_1) +
\frac{h_1^3}{3}
\end{array}\right].
\end{align}

We proceed to first show existence of stationary solution to the system 
\rf{phi}-\rf{mobmatrix} with Neumann and Dirichlet boundary conditions on a finite domain $\Omega=(0,L)\subset\R$.

\section{Energy functionals and existence of stationary solutions}

Consider the two-layer thin film equations \rf{pressures}-\rf{mobmatrix} defined on $\Omega=(0,L)\subset\R$ 
with\\ Neumann boundary conditions:
\be
\px h_1=\px h_2=\pxxx h_1=\pxxx h_2=0\ \ \text{for}\ \ x\in\partial\Omega.
\lb{NBC}
\ee
The energy functional associated to the gradient flow of the lubrication equation is given by
\be
E_\eps(h_1,h_2)=\int_0^L\left[\frac{\sigma}{2}\left|\px
h_1\right|^2+\frac{1}{2}\left|\px h_2\right|^2+\pot(h_2-h_1)\right]dx
\lb{DE}
\ee
where the potential function $\pot$ is given as in \rf{phi} with $(n,\ell)=(2,8)$. The relation to the thin-film equations is 
$p_i=\delta E_\eps/\delta h_i$.
From \rf{pressures}-\rf{mobmatrix} and \rf{NBC} conservation of mass follows
\begin{subequations}
\begin{align}
\lb{M1}
\int_{\Omega}h_1(t,x)\,dx&= m_1,\\
\lb{M2}
\int_{\Omega}\left(h_2(t,x)-h_1(t,x)\right)\,dx&= m_2\ \ \text{for all}\
\ t>0,
\end{align}
\end{subequations}
where $m_1$ and $m_2$ are positive constants. 
Any stationary solution of \rf{pressures}-\rf{mobmatrix} with \rf{NBC} satisfies 
\be\label{steadyeqh1h2}
0=Q\cdot\px{\bf p}
\ee
in $\Omega$, where the mobility matrix $Q$ is not singular
i.e. ${\rm det} Q\neq 0$ for all $h_1,\,{h_2-h_1}>0$. Therefore, one obtains that any
positive stationary solution of \rf{pressures}-\rf{mobmatrix} satisfies $\px p_1 =\px p_2 = 0$ in $\Omega$. 
This in turn is equivalent to 
\begin{subequations}
\begin{align}
\sigma\pxx h_1&=-\phi_\epsilon'(h_2-h_1)-\la_2+\la_1,\\
\pxx h_2&=\phi_\epsilon'(h_2-h_1)-\la_1,
\end{align}
\lb{SS1}
\end{subequations}
where constants $\la_2$ and $\la_1$ are Lagrange multipliers associated
with conservation of mass \rf{M1} and \rf{M2}, respectively.
To solve \rf{SS1} let us consider the equation for the difference \[ h(t,x)=h_2(t,x)-h_1(t,x) \]
which reads as follows
\be
\pxx h=\frac{\sigma+1}{\sigma}\phi_\epsilon'(h)+\frac{1}{\sigma}\la_2-\frac{\sigma+1}{\sigma}\la_1.
\lb{DE1}
\ee
For brevity we set
\bes
P:= -\frac{1}{\sigma}\la_2+\frac{\sigma+1}{\sigma}\la_1.
\ees
According to \cite{bertozzi2001dewetting} for  positive $P$, 
there exists a so called {\it droplet} solution $\bar{h}$ to \rf{DE1}  satisfying
boundary conditions \rf{NBC}, such that $\bar{h}(y+L/2)$ is an even function
and monotone decreasing for $y\in (0,L/2)$.

For this solution the asymptotics and main properties are derived in the next section, here we consider $\bar{h}$ as a known analytical function and integrate 
\rf{SS1} two times w.r.t. $x$. We then obtain  a solution to  \rf{SS1} with \rf{NBC} in the form
\bea
h_1=-\frac{1}{\sigma+1}\bar{h}-\frac{1}{2}\frac{\la_2}{\sigma+1}x^2+Cx+C_1,\nonumber\\
h_2=\frac{\sigma}{\sigma+1}\bar{h}-\frac{1}{2}\frac{\la_2}{\sigma+1}x^2+Cx+C_1.
\lb{RSS1}
\eea
Using now again \rf{NBC} one obtains that $\la_2=0,\ C=0$ and
\bea
h_1=-\frac{1}{\sigma+1}\bar{h}+C_1,\nonumber\\
h_2=\frac{\sigma}{\sigma+1}\bar{h}+C_1.
\lb{RSS2}
\eea
The additive constant $C_1$ and the remaining Lagrange multiplier 
are determined from the conservation of masses \rf{M1} and \rf{M2},
respectively. We conclude that the solution \rf{RSS2} is given
by combination of two symmetric droplets with constant outer layer. The next
theorem establishes existence of a global minimizer to the energy functional
\rf{DE} and shows that it satisfies  \rf{SS1} with \rf{NBC}.
\\

\begin{theorem}
Let $\Omega$ be a bounded domain of class $C^{0,1}$ in $\mR^d,\,d\ge 1$ and let $\mathbf{m}=(m_1,m_2)$ with $m_1,m_2>0$. 
Then a global minimizer of $E_{\epsilon}(\cdot,\cdot)$ defined in \rf{DE} exists in the class
\begin{equation}
\label{eqn:spacedef}
\Hm:=\left\{(h_1,h_2)\in H^1(\Omega)^2:
	m_1=\intd h_1,\
	m_2=\intd (h_2-h_1),\
	h_2\ge h_1 
\right\},
\end{equation}

For $d=1$ and $\Omega= (0,\,L)$ the function ${h_2-h_1}$ is strictly positive
and $(h_1,h_2)$ are smooth solutions to the ODE system  \rf{SS1} with \rf{NBC} and
\begin{equation}
\lambda_1=\frac{1}{L}\int_\Omega\phi_\epsilon'(h_2-h_1)\,dx,\quad \lambda_2=0.
\lb{l} 
\end{equation} 
\lb{TMin}
\end{theorem}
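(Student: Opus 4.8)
The plan is to establish the two assertions in turn: existence of a minimizer in $\Hm$ by the direct method of the calculus of variations for arbitrary $d\ge1$, and then, in the one-dimensional case, strict positivity of $h_2-h_1$ --- which is the only delicate point --- after which the constraint $h_2\ge h_1$ becomes inactive, $\pot$ is smooth along the minimizer, and the Euler--Lagrange system together with a regularity bootstrap follows routinely. \emph{Existence.} First I would record that $\Hm\neq\emptyset$ and $\inf_{\Hm}E_\eps$ is a finite real number: the constant pair $h_1\equiv m_1/|\Omega|$, $h_2\equiv(m_1+m_2)/|\Omega|$ has $h_2-h_1\equiv m_2/|\Omega|>0$ and finite energy, while $E_\eps\ge-|\Omega|$ because $\pot\ge-1$ and the Dirichlet part is nonnegative. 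On a minimizing sequence $(h_1^k,h_2^k)$ the Dirichlet part is bounded, so $\|\px h_i^k\|_{L^2}$ is bounded; since the mass constraints fix the means of $h_1^k$ and of $h_2^k$, the Poincar\'e--Wirtinger inequality yields a uniform bound in $H^1(\Omega)^2$. Along a subsequence $(h_1^k,h_2^k)\wto(h_1,h_2)$ in $H^1(\Omega)^2$ and, since $\Omega$ is bounded and Lipschitz, strongly in $L^2$ and a.e.\ after passing to a further subsequence; hence the mass constraints and the inequality $h_2\ge h_1$ survive in the limit, so $(h_1,h_2)\in\Hm$. The Dirichlet part of $E_\eps$ is convex, hence weakly lower semicontinuous, and $\pot(h_2^k-h_1^k)\to\pot(h_2-h_1)$ a.e.\ with uniform lower bound $-1$, so Fatou's lemma gives lower semicontinuity of the potential term. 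Thus $E_\eps(h_1,h_2)\le\liminf_k E_\eps(h_1^k,h_2^k)=\inf_{\Hm}E_\eps$, so $(h_1,h_2)$ is a minimizer; in particular $\intd\pot(h_2-h_1)<\infty$.

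\emph{Strict positivity for $d=1$, $\Omega=(0,L)$.} This is where the short-range repulsion in $\pot$ enters, and it is the crux of the argument. Integrating \rf{phi} with $(n,\ell)=(2,8)$ gives $\pot(s)=\tfrac{\eps^{8}}{8}\,s^{-8}-\tfrac{\eps^{2}}{2}\,s^{-2}+c_0$ (with $c_0$ fixed by $\min\pot=-1$), so there is $\delta_0>0$ with $\pot(s)\ge\tfrac{\eps^{8}}{16}\,s^{-8}$ for $0<s<\delta_0$. Since $h:=h_2-h_1\in H^1(0,L)\hookrightarrow C^{0,1/2}([0,L])$, a zero $h(x_0)=0$ would force $h(x)\le C\,|x-x_0|^{1/2}$ near $x_0$, hence $\pot(h(x))\ge c\,|x-x_0|^{-4}$ on a neighbourhood of $x_0$, which is not integrable --- contradicting $\int_0^L\pot(h)\,dx<\infty$. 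Therefore $h$ has no zero and, being continuous on a compact interval, $\min_{[0,L]}h>0$; in particular the constraint $h_2\ge h_1$ is strictly inactive at the minimizer.

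\emph{Euler--Lagrange equations, multipliers, and smoothness ($d=1$).} Since $h$ is bounded away from $0$ (and, being continuous, bounded above), $\pot$ is smooth along $(h_1,h_2)$, and for any $\psi_1,\psi_2\in H^1(\Omega)$ with $\intd\psi_1=\intd(\psi_2-\psi_1)=0$ the family $(h_1+t\psi_1,\,h_2+t\psi_2)$ stays in $\Hm$ for small $|t|$. The two constraint functionals $\psi\mapsto\intd\psi_1$ and $\psi\mapsto\intd(\psi_2-\psi_1)$ are linearly independent, so the Lagrange multiplier rule produces constants $\la_1,\la_2$ with
\be
\intd\bigl[\sigma\,\px h_1\,\px\psi_1+\px h_2\,\px\psi_2+\potp(h_2-h_1)(\psi_2-\psi_1)\bigr]\,dx=\la_2\intd\psi_1+\la_1\intd(\psi_2-\psi_1)
\ee
for all $\psi_1,\psi_2\in H^1(\Omega)$. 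Setting $\psi_2\equiv0$, and then $\psi_1\equiv0$, gives the weak forms of the two equations in \rf{SS1}, with $\px h_1=\px h_2=0$ on $\p\Omega$ appearing as natural boundary conditions. A standard elliptic bootstrap then upgrades regularity: the right-hand sides of \rf{SS1} are continuous (as $h\in C^0$ and $\potp$ is smooth on $(0,\infty)$), so $h_1,h_2\in C^2$; iterating, $h_1,h_2\in C^\infty([0,L])$. Differentiating \rf{SS1} once and using $\px h_i=0$ on $\p\Omega$ yields $\pxxx h_1=\pxxx h_2=0$ there, completing \rf{NBC}. Finally, integrating the two equations of \rf{SS1} over $(0,L)$ and using \rf{NBC}: the second gives $\la_1=\tfrac1L\intd\potp(h_2-h_1)\,dx$, and substituting into the (integrated) first forces $\la_2=0$, which is \rf{l}.

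The main obstacle is the strict-positivity step: without it $\pot$ need not be differentiable along the candidate minimizer and the unilateral constraint $h_2\ge h_1$ could be active, blocking the passage to the pointwise system \rf{SS1}. The argument closes precisely because the one-dimensional embedding $H^1\hookrightarrow C^{0,1/2}$ combines with the repulsive exponent $\ell=8$ (any $\ell\ge2$ suffices) to render the energy infinite at every configuration with $h_2=h_1$ somewhere; the remaining ingredients --- compactness, weak lower semicontinuity, the Lagrange multiplier rule and the bootstrap --- are standard.
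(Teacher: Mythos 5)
Your proposal is correct, and for the existence part and the Euler--Lagrange/regularity part it follows essentially the same direct-method route as the paper (minimizing sequence, Rellich compactness, Fatou for the potential, weak lower semicontinuity, then Lagrange multipliers and elliptic bootstrap; you are in fact a bit more careful than the paper in invoking Poincar\'e--Wirtinger to upgrade the gradient bound to a full $H^1$ bound). The one genuinely different step is the crux, strict positivity of $h_2-h_1$ for $d=1$. The paper works along the minimizing sequence: from the uniform bound on $\int\pot(h_2^k-h_1^k)$ it extracts a uniform $L^2$ bound on $(h_2^k-h_1^k)^{-4}$, combines it with the gradient bound to control $\|(h_2^k-h_1^k)^{-3}\|_{W^{1,1}(0,L)}$, and uses the embedding into $L^\infty$ to get a lower bound on $h_2^k-h_1^k$ that is \emph{uniform in $k$}. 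You instead argue directly on the limit: $h=h_2-h_1\in H^1(0,L)\hookrightarrow C^{0,1/2}$, so a zero of $h$ at $x_0$ forces $\pot(h(x))\gtrsim|x-x_0|^{-4}$ nearby, contradicting $\int_0^L\pot(h)\,dx<\infty$. Both arguments are valid and both exploit the repulsive exponent $\ell=8$ (indeed any $\ell\ge 2$, as you note); yours is shorter and suffices for the theorem as stated, while the paper's version yields the slightly stronger uniform positivity along the whole minimizing sequence, which can be convenient if one later wants compactness of near-minimizers. No gaps.
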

\begin{proof} Even though the proof proceeds very analogously to the one of Theorem 2-3
in~\cite{bertozzi2001dewetting} using direct methods of the calculus of variations,  
for the convenience of the reader we give here for our system. 

Let
$(h_1^k,h_2^k)_{k\in\mathrm{N}}$ be a minimizing sequence which exists since
$E_\epsilon(m_1,m_2+m_1)<\infty$. Observe that $\phi_\epsilon(\cdot)$ is
bounded from below by a constant. Hence, a constant $C_2$ exists such that
\begin{equation}
\int_\Omega|\px h_1^k|^2+|\px h_2^k|^2\,dx\le C_2\ \text{for all}\ k\in\mathrm{N}.
\lb{M1ref}
\end{equation} 
Rellich's compactness theorem implies that a subsequence (again denoted by $(h_1^k,h_2^k)_{k\in\mathrm{N}})$
exists which converges strongly in $L^2(\Omega)^2$ and pointwise almost everywhere to $(h_1,h_2)\in H^1(\Omega)^2$.
By Fatou's lemma, we deduce that also $\phi_\epsilon(h_2-h_1)$ lies in $L^1(\Omega)$. Using the weak
lower semicontinuity of the norm, we obtain
\begin{equation}
\displaystyle E_{\epsilon}(h_1,h_2)\le \lim\inf_{k\rightarrow\infty} E_{\epsilon}(h_1^k,h_2^k)=\inf_{(\bar{h}_1,\bar{h}_2)\in V}E_{\epsilon}(\bar{h}_1,\bar{h}_2).\nonumber
\end{equation}
Consequently, $(h_1,h_2)$ is a minimizer and the integrability of $\phi_\epsilon(h_2-h_1)$ implies that  $h_2-h_1>0$ almost everywhere in $\Omega$.

Let now $d=1$ and $\Omega=(0,\,L)$. The estimate \rf{M1ref} implies
\begin{equation}
\int_\Omega|\px  (h_2^k-h_1^k)|^2\,dx\le  C_3.\nonumber
\end{equation}
Next, the boundedness of $\int \phi_\epsilon(h_2^k-h_1^k)\,dx$ and definition of $\phi_\epsilon$ imply a
uniform in $k$ bound on $||(h_2^k-h_1^k)^{-4}||_2$. Using this, one can estimate
\begin{align*}
\int_0^L|\partial_x((h_2^k-h_1^k)^{-3})|\,dx&=3\int_0^L\frac{|\partial_x(h_2^k-h_1^k)|}{(h_2^k-h_1^k)^4}\,dx\\
&\le 3||\partial_x(h_2^k-h_1^k)||_2\ ||(h_2^k-h_1^k) ^{-4}||_2\le C_4,
\end{align*}
where $C_4$ is constant. Owing to the continuous embedding of  $W^{1,3}(0,1)$ into $L^{\infty}(0,1)$ the strong positivity of $h_2-h_1$ follows.
This in turn implies the differentiability of the function $F_\epsilon(s):=E_{\epsilon}(h_1+s\varphi_1,h_2+s\varphi_2)$ considered with fixed $(\varphi_1,\varphi_2)\in H^1(0,L)^2$
for sufficiently small $s$. Since $(h_1,h_2)$ is a minimizer, by differentiation of $F_\epsilon(s)$ at $s=0$, we obtain that
\begin{equation}
\int_0^L(-\sigma\partial_{xx}h_1-\phi_\epsilon'(h_2-h_1))\varphi_1+(-\partial_{xx}h_2+\phi_\epsilon'(h_2-h_1))\varphi_2\,dx=0,\nonumber
\end{equation}
for all $(\varphi_1,\varphi_2)\in H^1(0,L)^2$ such that
\begin{equation}
\int_0^L\varphi_1\,dx=\int_0^L(\varphi_2-\varphi_1)\,dx=0.\nonumber
\end{equation}
Without this constraint, using Lagrangian multipliers and testing with general $(\psi_1,\psi_2)\in H^1(0,L)^2$ this yields
\begin{equation}
\int_0^L\Big[(-\sigma\partial_{xx}h_1-\partial_{xx}h_2)\psi_1 +(-\partial_{xx}h_2+\phi_\epsilon')\psi_2\Big] dx
-\frac{1}{L} \int_0^L\int_0^L\phi_\epsilon'\,dy\,\psi_2\,dx=0.\nonumber
\end{equation}
Standard elliptic regularity theory then implies that $(h_1,h_2)$ are smooth
solutions to  \rf{SS1} together with \rf{NBC} and \rf{l}.
\end{proof}\\

\begin{remark}
Note, that for Dirichlet boundary conditions we can proceed as follows:
Let us impose on system \rf{SS1} the Dirichlet boundary conditions
 \be
h_1(0)=h_1(L)=A,\ \ h_2(0)=h_2(L)=B,
\lb{DBC}
\ee
such that 
\be
\displaystyle B-A=\min_{x\in (0,L)} \bar{h}(x),
\lb{Eps}
\ee
where $\bar{h}$ is the Neumann solution to \rf{DE1} defined above.
In this case it follows again that $h_2-h_1=\bar{h}$. Consequently, $h_1$ and $h_2$
are given by \rf{RSS1} with constants $\la_1,\la_2,C,C_1$ determined uniquely by
\rf{DBC} and conservation of mass \rf{M1}-\rf{M2}.
Using \rf{Eps} and the asymptotics for $\bar{h}$ one obtains that the leading order of the solution \rf{RSS1} as
$\eps\go 0$ has now the form 
\begin{align}
h_1(x)&=\frac{1}{2}\frac{\la_1-\la_2}{\sigma}\left((x-\tfrac{L}{2})^2-s^2\right)+C_1,\ \
&&x\in\omega\nonumber\\
h_2(x)&=-\frac{1}{2}\la_1\left((x-\tfrac{L}{2})^2-s^2\right)+C_1,\ \
&&x\in\omega\nonumber\\
h_1(x)&=h_2(x)=-\frac{1}{2}\frac{\la_2}{\sigma+1}\left((x-\tfrac{L}{2})^2-s^2\right)+C_1,\ \
&&x\in(0,L)\setminus\omega,
\lb{DSI}
\end{align}
where $\omega=(L/2-s,L/2+s)$ and
\bes
s^2=\frac{2\sigma(\sigma+1)}{\left(\lambda_2-(\sigma+1)\lambda_1\right)^2}.
\ees
In contrast to the solution of \eqref{DSI} with Neumann conditions, solutions with Dirichlet
boundary conditions are not constant but quadratic in the ultra-thin layer $(0,L)\setminus\omega$.
\end{remark}

\section{Matched asymptotic solution and contact angles}

Note first that the system of equations for $h_1$ and $h_2$ \rf{steadyeqh1h2} is equivalent to following system for $h_1$ and $h$
\begin{subequations}\label{systemh}
\begin{align}
0 &= \p_x\left(-\sigma\p_{xx}h_1 - \phi_{\varepsilon}'(h)\right), \label{stath}\\
0 &= \p_x\left(-\frac{\sigma}{\sigma+1}\p_{xx}h +  \phi'_{\varepsilon}(h)\right).\label{stath1}
\end{align}
\end{subequations}
where we denote $\sigma=\sigma_1/\sigma_2$, see \cite{JMPW11} for a more detailed derivation. For our asymptotic analysis, this is convenient, since now for the variable $h=h_2-h_1$ we can distinguish the core droplet region, which we will call the
``outer region'' and the adjacent thin regions of thickness $\eps$, which we
call the ``inner region''. We will derive a sharp-interface limit using matched
asymptotic analysis in the limit as $\eps\to 0$. For this we first write the
equations in the form such that the intermolecular potential is small in the
core region and becomes order one in the adjacent thin regions. Using $(n,\ell)=(2,8)$ we define 
\be
\phi'_{\eps}(h) = \ds \frac{1}{\eps}\Phi'\left(\ds\frac{h}{\eps}\right)
\ee

\subsection{Stationary solution for $\boldsymbol h$}
As we will later show rigorously, we can assume that the droplet is (axi)symmetric and the profile a decreasing function, so that without loss of generality, the maximum of $h$ is at the origin of our coordinate system. Now consider the problem $h$ and $x\geq 0$
\begin{subequations}\label{problemhxxx}
\begin{align}
&0 = \p_x\left[\frac{\sigma}{\sigma+1}\p_{xx}h - \frac{1}{\eps}\Phi'\left(\ds\frac{h}{\eps}\right) \right], \\  \nn\\
&\lim_{x\to\infty} h = h_{\infty},\quad \lim_{x\to\infty} \px h = 0,\quad \lim_{x\to\infty} \pxx h = 0.
\end{align}
\end{subequations}
We can integrate this twice and use the conditions as $x\to\infty$ to fix the integration constants to obtain
\be\label{problemhx}
\p_x h = \sqrt{2\frac{\sigma +1}{\sigma}}\sqrt{\Phi\left(\frac{h}{\eps}\right)-\Phi\left(\frac{h_{\infty}}{\eps}\right)-\frac{1}{\eps}\Phi'\left(\frac{h_{\infty}}{\eps}\right)\left(h-h_{\infty}\right)}
\ee
The solution to this problem shows a steep decline in height towards $O(\eps)$ in an $\eps$-strip around $x=s$, where we would like to determine the apparent contact-angle. This can be obtained by writing the problem in so-called outer and inner coordinates, valid in the core and the adjacent thin regions, and matching as $\eps\to 0$. Interestingly, while it is easy to obtain the condition for the contact angle, it turns out that in order to carry out the complete matching consistently, we need to go up to second order in the matching, in order to account for the logarithmic switch-back terms, that come into play in this problem, see \cite{paco88} for a discussion of these terms.  

Note, that the coefficient $(\sigma+1)/\sigma$ can be removed by rescaling $x$ 
appropriately, leading to the classical problem of a droplet of height $h$ on a 
solid substrate.
Interestingly, to our knowledge for this problem the above mentioned logarithmic 
switch-back terms have not been noticed before.

\subsection*{Outer problem} 
The symmetry of the problem leads us to the condition that at the symmetry axis $x=0$ we have 
\be
\p_x h=0
\ee
It is also convenient to normalize the height such that $h(0)=1$. In this case
we obtain from \rf{problemhx} an algebraic equation for $h_{\infty}$ and $\eps$
that can be approximated as $\eps\to 0$. 
\be
0=\Phi\left(\frac{1}{\eps}\right)-\Phi\left(\frac{h_{\infty}}{\eps}\right)-\frac{1}{\eps}\Phi'\left(\frac{h_{\infty}}{\eps}\right)\left(1-h_{\infty}\right)
\ee
Solving this by making the ansatz for the asymptotic expansion for $h_{\infty}$ 
\be
h_{\infty}=\eps h_{\infty,0} + \eps^2 h_{\infty,1} + \eps^3 h_{\infty,2} + O(\eps^4)
\ee
we obtain
\be
h_{\infty,0}=1,\quad h_{\infty,1}=\frac{1}{16},\quad h_{\infty,2}=\frac{45}{512},\quad
\ee
Next, we assume that the asymptotic solution to the outer problem can be represented by the expansion
\be
h(x;\eps)=f_{0}(x) + \eps f_{1}(x) +   \eps^2 f_{2}(x) +    O(\eps^3).
\ee
The leading order outer problem then becomes 
\begin{subequations}\label{outerh0}
\begin{align}
&\p_x f_{0}=-\sqrt{\frac{3}{4}\frac{\sigma +1}{\sigma}\left(1-f_{0}\right)},\\
&f_{0}(0)=1
\end{align}
\end{subequations}
which has the solution 
\be
f_{0}(x)=-\frac{3}{16}\frac{\sigma + 1}{\sigma}x^2+1.\label{h0a}
\ee
Hence, the leading order outer solution will vanish as $x$ approaches the location 
\be
s = \frac{4}{\sqrt{3}}\sqrt{\frac{\sigma}{\sigma + 1}}\label{seq}
\ee  
However, the full solution does not vanish and will be obtained by matching to the solution of the ``inner'' problem near $s$. 
We will find that in order to complete the solution, we will need to solve the expansion up-to second order. 
We find for $f_{1}$ and $f_{2}$ 
\begin{subequations}\label{outer1}
\begin{align}
&\p_x f_{1}=\frac{f_{1}}{x}-\frac{3}{16}\frac{\sigma +1}{\sigma}\, x,\\
&f_{1}(0)=0
\end{align}
\end{subequations}
and 
\begin{subequations}\label{outer2}
\begin{align}
&\p_x f_{2}=2\frac{f_{2}}{x}+\frac{1}{x}\left(\frac{8}{3}\frac{1}{f_{0}^2}+\frac{1}{16}\left(f_{0}-1\right)-\frac{2}{3}\left(f_{0}+3\right)+2f_{1}\right)+\frac{3}{8}\frac{\sigma + 1}{\sigma}\, x \\
&f_{2}(0)=0
\end{align}
\end{subequations}

\subsection*{Inner problem}

The solution of the inner problem lives in an $\eps$ neighborhood of $x=s$ and
extends towards $x\to +\infty$. It will be matched to the outer problem in the
other direction. Hence we introduce the inner variables $v(z)$ and independent 
variable $z$ via 
\begin{equation}\label{inner var}
h(x)=\varepsilon\,v(z; \eps) \quad\mbox{and}\quad  x=s+\varepsilon z.
\end{equation}
Rewriting the problem \rf{problemhx} in these coordinates and making the ansatz 
\be
v(z;\eps)=v_0(z) + \eps v_1(z) + O(\eps^2)
\ee
we find to leading order the problem 
\be 
\p_z v_0 = -\sqrt{2\frac{\sigma +1}{\sigma}}\sqrt{-\frac{1}{2v_0^2}+\frac{1}{8v_0^8}+\frac34}
\ee
and to $O(\eps)$ the problem 
\be
\p_z v_1=-\sqrt{2\frac{\sigma+1}{\sigma}}\sqrt{-\frac{1}{2v_0^2}+\frac{1}{8v_0^8}+\frac34}\,\,\frac{{3}v_0^9 (v_0-1) + 8v_1(1-v_0^6)}{2v_0(4v_0^6-3v_0^8-1)}
\ee
We solve and match in the inner coordinates and obtain by expanding $v$ at $z=-\infty$ 
\bea\label{expv}
v_0+\eps v_1 &=& -\frac{\sqrt{3}}{2}\sqrt{\frac{\sigma + 1}{\sigma}}\, z + a_1 -\frac{4}{9}\sqrt{3\frac{\sigma}{\sigma+1}}\,\frac{1}{z} + \cdots\nn\\
&& +\eps\left(-\frac{3}{16}\frac{\sigma +1}{\sigma}\, z^2 + \frac{\sqrt{3}}{4}\frac{\sigma + 1}{\sigma}(a_1-1)\, z - \ln(-z) \right.\nn\\
&&\hspace*{2cm} \left.+ C + \frac16 +\frac23\sqrt{3\frac{\sigma}{\sigma+1}}(a_1+1)\frac1z+\cdots\right).
\eea
For the corresponding outer expansion we have 
\bea\label{exph}
\frac{f_0+\eps f_1 + \eps^2 f_2}{\eps} &=& -\frac{\sqrt{3}}{2}\sqrt{\frac{\sigma + 1}{\sigma}}\, z - 1 -\frac{4}{9}\sqrt{3\frac{\sigma}{\sigma+1}}\,\frac{1}{z} + \cdots\nn\\
&& +\eps\left(-\frac{3}{16}\frac{\sigma +1}{\sigma}\, z^2 - \frac{\sqrt{3}}{2}\frac{\sigma + 1}{\sigma}\, z - \ln(-z) \right.\nn\\
&&\hspace*{1cm} \left. -\ln\left(\frac{\sqrt{3}}{8}\sqrt{\frac{\sigma + 1}{\sigma}}\right) -\frac{19}{96} - \ln(\eps)+\cdots\right).
\eea

We note that all of the terms in the first row of \rf{expv} and \rf{exph} match provided $a_1=-1$. The terms in the second row match provided 
\be
C=-\ln(\eps) -\frac{35}{96}-\ln\left(\frac{\sqrt{3}}{8}\sqrt{\frac{\sigma + 1}{\sigma}} \right)
\ee
where we note the appearance of a so-called ``logarithmic switch-back'' term $\ln(\eps)$. 
Hence, the composite solution is 
\bea\label{compositesol}
\bar h&=& \eps\left[v_0\left(\frac{x-s}{\eps}\right) +\frac{\sqrt{3}}{2}\sqrt{\frac{\sigma + 1}{\sigma}}\,\frac{x-s}{\eps}\right] \\
&&\hspace*{-0.5cm}+\eps^2\left[ v_1\left(\frac{x-s}{\eps}\right)+\frac{3}{16}\frac{\sigma}{\sigma+1}\left(\frac{x-s}{\eps}\right)^2+\frac{\sqrt{3}}{2}\frac{\sigma + 1}{\sigma}\,\frac{x-s}{\eps}\right]\nn\\
&&\hspace*{-0.5cm}+f_0(x)+\eps\Big[f_1(x) +1\Big]\nn\\
&&\hspace*{-0.5cm}+\eps^2\left[f_2(x)+\frac{4}{9}\sqrt{\frac{3\sigma}{\sigma+1}}\,\frac{1}{x-s}+\ln\left(s-x\right)+\frac{19}{96}+\ln\left(\frac{\sqrt{3}}{8}\sqrt{\frac{\sigma + 1}{\sigma}}\right)\right]\nn
\eea
with $s$ given in equation \rf{seq} and for $x<s$. For $x\ge s$ only the inner expansion $\bar{h}=\eps v_0 + \eps^2 v_1$ remains. 

\begin{figure}
\centering
\includegraphics[width=0.9\textwidth]{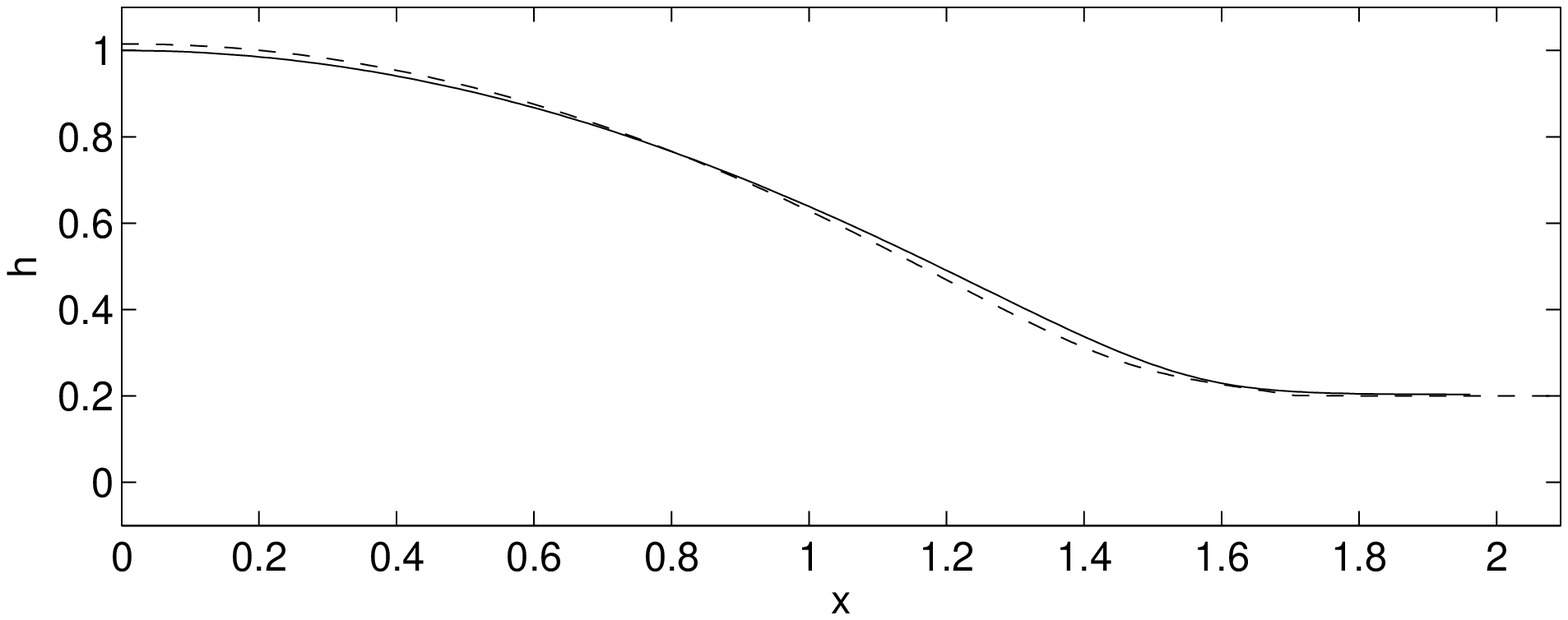}

\includegraphics[width=0.9\textwidth]{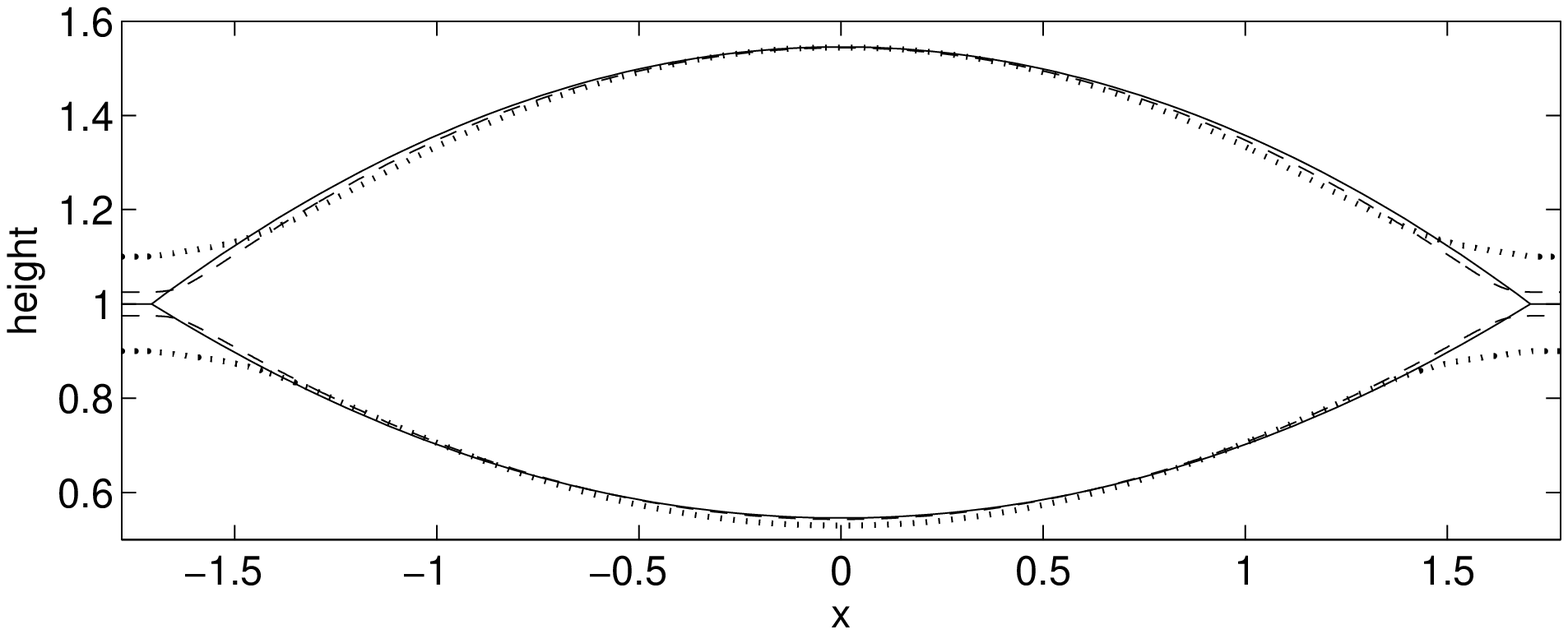}

\caption{Top: Comparison of the composite solution $h_c$ (dashed curve) with the numerical
solution of \eqref{problemhxxx} (solid curve), for $\varepsilon=0.2$ and $\sigma=1.2$.
Bottom: Solutions for $h_1$ and $h_2$ reconstructed from the composite solution $\bar
h$ for $\sigma=1.2$ and $\varepsilon=0.05$ (dashed curve) and $\varepsilon=0.2$
(dotted curve) and the solution to the sharp interface model \eqref{sharpinterace} (solid curve).}
\label{comparisonasymptotics}
\end{figure}

\subsection*{Stationary solution for $h_1$ and $h_2$}

To complete the solution, we determine the solution to the liquid-liquid
interface $h_1$ simply by adding equations \rf{stath} and \rf{stath1}. Then we
integrate thrice and use the far-field conditions
$\pxx h, \px
h, \pxx h_1, \px h_1 \to 0$, $h\to h_{\infty}$ and $h_1\to d$ as $x\to\pm\infty$ to fix the constants. This results in $h_1$ being
\be
h_1 = - \frac{1}{\sigma + 1}\left(h-h_{\infty}\right) + d\label{h1sol},
\ee
and equivalently $h_2$ is
\be
h_2=\frac{\sigma}{\sigma + 1}\left(h-h_{\infty}\right) +d+h_\infty\label{h2sol}.
\ee
Here $d$ denotes the height $h_1$ as $x\to\pm\infty$ and we assume $d$ to be large enough so that $h_1$ never becomes negative. Also note that for other boundary conditions, such as e.g. Dirichlet conditions mentioned in the previous section, further contributions will arise. Unlike the solutions for droplets on solid substrates, here new families of profiles for the ultra-thin film connecting a droplet to the boundaries or to other droplets arise. 

In figure \ref{comparisonasymptotics} we compare our asymptotic solution with the numerical solution. Observe that the $O(\eps)$ solution already gives an excellent approximation of the exact solution for $\eps=0.2$, where the exact solution is approximated by the higher-order numerical solution of the boundary value problem. This suggests that a sharp-interface model should also be a good approximation of the full model. The sharp-interface model for $h$ is simply the leading order outer problem for $h$, but now with boundary conditions, that result from the leading order matching. Hence, we obtain 
\begin{subequations}\label{sharpinterace}
\begin{align}
&0=\pxxx f_0\\
\intertext{with boundary conditions}
&f_0=0 \quad\mbox{and}\quad \px f_0= \p_{z}v_0= - \sqrt{-2\frac{\sigma+1}{\sigma}\Phi(1)}  \quad\mbox{at}\quad x=\pm s
\end{align}
\end{subequations}
where the contact angle has been determined via matching. Equivalently, for the half-droplet, by imposing symmetry we have the boundary conditions
\be
f_0(s)=0, \quad \px f_0(s)= - \sqrt{-2\frac{\sigma+1}{\sigma}\Phi(1)} \quad\mbox{and}\quad  \px f_0(0)= 0
\ee

In the following section we show how the sharp-interface model is obtained via
$\Gamma$-convergence and also proof existence and uniqueness of its solutions.

\section{Sharp-interface limit via $\boldsymbol \Gamma$-convergence}

In this section we investigate properties of stationary solutions of the
sharp-interface two-layer model. Such model can be obtained
by considering the limiting problem $\eps \to 0$ in the
framework of $\Gamma$-convergence. For one-layer systems corresponding
minimizers are known as mesoscopic droplets \cite{otto2007coarsening}. 
In this approach equilibrium contact angles can be directly deduced from the 
Euler-Lagrange equation of the resulting $\Gamma$-limit energy. On the other 
hand showing an equi-coerciveness property we have that the sequence of
minimizers of $E_\eps$ converges to a minimizer of the $\Gamma$-limit energy $E_\infty$.

For boundary conditions $h_1=h_2$ and certain domains we show that solutions 
of the minimization problem $\min_{(h_1,h_2)}E_\infty$
exist and are unique up-to translations. The main technique used here is
the symmetric-rearrangement, see e.g. \cite{leoni2009first, lieb2001analysis}.

For the section to come we consider energies such as in \eqref{DE}. For later 
convenience we  define $\potG(h)=\bigl(\pot(h)-\Phi(1)\bigr)/\phim$ 
where as before $\PotG(h)=\potG(h/\eps)$ is independent of $\eps$. The shift
by $\phim$ has the advantage of working with a non-negative energy without 
changing the Euler-Lagrange equations.

With these definitions consider the following family of minimization problems. 
For $\Omega\subset\mathbb{R}^n$ bounded with Lipschitz boundary and
$m_1,m_2>0$ given and $\eps>0$ we look for minimizers 
of $E_\eps:\Hm\to \mathbb{R}^\infty$ defined as 
\begin{align}
\label{eqn:gammaenergy}
E_\eps(h_1,h_2)=
        \intd \frac{\sigma}{2}|\nabla h_1|^2 + \frac{1}{2}|\nabla
h_2|^2 + \,\phim\,\potG(h_2-h_1) 
\end{align}
with $\sigma>0$ and $\potG(h)=\PotG\bigl(h/\eps\bigr)$ as $\eps\to 0$. Note that
nonnegativity $h_1>0$ is not enforced because otherwise extra terms in $E_\eps$
are required.  

\subsection{$\boldsymbol \Gamma$-convergence}

For a given domain $\Omega\subset\mathbb{R}^n$ bounded with Lipschitz boundary
define the space of admissible interfaces as before in \eqref{eqn:spacedef} by
\begin{equation}
\Hm:=\left\{(h_1,h_2)\in H^1(\Omega)^2:
	m_1=\intd h_1, \
	m_2=\intd (h_2-h_1), \
	h_2\ge h_1 
\right\},
\end{equation}
and in general let $\PotG:\mathbb{R}\to\mathbb{R}$ be a function with the properties\\

\begin{enumerate}
\item[(i)] $\PotG\ge0$ \,and\, $\PotG(h)=0 \Leftrightarrow h=1$.
\item[(ii)]\label{prop:psi_infty} $\PotG(h)\xrightarrow{h\to +\infty} 1$ and
$\PotG(h)\le 1 $ for $h>1$.\\
\end{enumerate}

We want to investigate the family of minimization problems \eqref{eqn:gammaenergy}.
First we note that the sequence $E_\eps$ is equi-coercive in the weak topology
of $H^1(\Omega)^2$.
This is a simple consequence of
\begin{align*}
E_\eps(h_1,h_2)\ge c ( \|\nabla h_1\|_{H^1}^2+\|\nabla  h_2\|_{H^1}^2),
\end{align*}
which holds for all $(h_1,h_2)\in \Hm$. Together with the $\Gamma$-convergence
the equi-coercivity implies the following abstract convergence result. We know
that any sequence $\{h_{1,n},h_{2,n}\}$ of minimizers to the energies
$E_{\eps_n}$ has a weakly converging subsequence $(h_{1,n},h_{2,n})\wto
(h_{1}^*,h_{2}^*)$. Furthermore the limit $(h_{1}^*,h_{2}^*)$ is a minimizer of
the $\Gamma$-limit energy $E_\infty$. This relates minimizers of $E_\eps$ to
minimizers of the $\Gamma$-limit $E_{\infty}$.

Now we investigate the $\Gamma$-limit of \eqref{eqn:gammaenergy} in the topology
of weak convergence in the space $H^1(\Omega)^2$. 
We recall the definition of $\Gamma$-convergence , see also
\cite{braides2002gamma,dal1993introduction}. 
\\

\begin{definition}
We say that a sequence $E_\eps:X\to\mathbb{R}^\infty$ $\Gamma$-converges in $X$
in the weak topology to $E_\infty:X\to\mathbb{R}^\infty$ if for all $x\in X$ we have
\begin{itemize}
 \item[(i)]  (\textit{lim-inf inequality}) 
For every sequence $\{x_\eps\}\subset X$ weakly converging to $x$ there holds
\[
 E_\infty(x)\le \liminf_\eps E_\eps(x_\eps) .
\]
 \item[(ii)]  (\textit{lim-sup inequality}) 
There exists a sequence $\{x_\eps\}\subset X$ weakly converging to $x$ 
such that
\[
 E_\infty(x) \ge \limsup_\eps E_\eps(x_\eps).
\]
\end{itemize}
The function $E_\infty$ is called the $\Gamma$-limit of $(E_\eps)$, and we write
$E_\infty\!=\!\Gamma\text{-}\lim_\eps E_\eps.$
\end{definition}
\\

The key proposition to compute the overall $\Gamma$-limit is to consider the 
$\Gamma$-limit of the potential separately. Here we use that weak convergence 
in $H^1$ implies strong convergence in $L^2$ and the right continuity of $q\mapsto\int \chi\{h>q\}$ 
for any given $h\in H^1$.
\\

\begin{proposition}
\label{gammaconvproposition}
Consider the functional $F_\eps:H^1(\Omega)\to\mathbb{R}^\infty$ defined as 
\[
F_\eps(h)=\begin{cases}\intd \potG(h) & h\in X_m, \\
           \infty & \text{otherwise},
	  \end{cases}
\]
where 
\[
 X_m=\left\{h\in H^1(\Omega): h\ge0, \intd h=m\right\}.
\]
Then 
\[
\Gamma\text{-}\lim_{\eps}F_\eps(h)=F_\infty(h)=\begin{cases}\intd \chi\{h>0\} &
h\in X_m , \\
           \infty & \text{otherwise},
	  \end{cases}
\]
with $\chi$ being the characteristic function.
\end{proposition}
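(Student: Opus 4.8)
## Proof proposal

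The plan is to establish the two inequalities in the definition of $\Gamma$-convergence separately, working with the rescaled potential $\potG(h)=\PotG(h/\eps)$ whose key features are the two normalization properties (i) and (ii). Throughout I would exploit the compact embedding $H^1(\Omega)\hookrightarrow L^2(\Omega)$, so that any weakly $H^1$-convergent sequence converges strongly in $L^2$ and, passing to a subsequence, pointwise a.e.; and I would use the layer-cake / coarea identity $\intd \chi\{h>0\} = \sup_{q>0}\intd \chi\{h>q\}$ together with the right-continuity of $q\mapsto\intd\chi\{h>q\}$ for fixed $h\in H^1(\Omega)$. The mass constraint $\intd h = m$ with $h\ge0$ is preserved in the limit since it is linear and continuous under strong $L^2$ convergence, so the ``otherwise'' cases are automatic and we may restrict to $h,h_\eps\in X_m$.

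For the \emph{lim-inf inequality}: given $h_\eps\wto h$ in $H^1$, pass to a subsequence realizing the liminf of $F_\eps(h_\eps)$ and converging pointwise a.e.\ to $h$. Fix $q>0$. On the set $\{h>q\}$, for a.e.\ $x$ we have $h_\eps(x)>q/2$ eventually, hence $h_\eps(x)/\eps\to+\infty$, so by property (ii) $\PotG(h_\eps(x)/\eps)\to 1$; on the complement $\potG(h_\eps)\ge 0$ by (i). Fatou's lemma then gives $\liminf_\eps \intd\potG(h_\eps)\ge \intd\chi\{h>q\}$. Taking the supremum over $q>0$ and using the layer-cake identity yields $\liminf_\eps F_\eps(h_\eps)\ge \intd\chi\{h>0\}=F_\infty(h)$.

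For the \emph{lim-sup (recovery sequence)} inequality: I need $h_\eps\wto h$ with $\limsup_\eps \intd\potG(h_\eps)\le \intd\chi\{h>0\}$. The natural idea is to take $h_\eps$ essentially equal to $h$ on a large portion of $\{h>0\}$ (where $\potG(h/\eps)\to 1\cdot\chi\{h>0\}$ pointwise and is bounded by $1$, so dominated convergence applies there), while on a thin ``support layer'' near $\partial\{h>0\}$ and on $\{h=0\}$ one lifts $h_\eps$ to order-$\eps$ values so that $h_\eps/\eps$ stays bounded away from $0$ — keeping $\potG(h_\eps)$ bounded — and simultaneously restores the mass constraint $\intd h_\eps = m$, which the truncation perturbed by an $O(\eps)$ amount. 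Concretely one can set $h_\eps = \max(h,\delta_\eps)$ with $\delta_\eps=c\eps$ for a suitable constant, plus a small corrective bump supported where $h>0$; then $h_\eps\to h$ in $H^1$ strongly (the $\max$ with a constant is bounded in $H^1$ and converges), the mass correction is $O(\eps)$ and can be absorbed, and $\intd\potG(h_\eps) = \intd_{\{h>\delta_\eps\}}\PotG(h/\eps) + O(|\{0<h\le\delta_\eps\}|) \to \intd\chi\{h>0\}$ by dominated convergence on the first term and by absolute continuity of the measure on the vanishing-measure set.

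The main obstacle is the recovery-sequence construction, specifically reconciling three competing demands at once: $h_\eps\ge 0$, the exact mass constraint $\intd h_\eps=m$, and $H^1$-closeness to $h$, while keeping $\potG(h_\eps)$ controlled near the free boundary $\partial\{h>0\}$ where $h$ itself may have arbitrarily small gradient and the set $\{0<h\le\delta_\eps\}$ need not have small measure a priori. This is handled by choosing $\delta_\eps\to0$ slowly enough that $|\{0<h\le\delta_\eps\}|\to0$ (true for a.e.\ choice of threshold, or by replacing $h$ first with a nearby function whose sublevel sets behave well), and by noting the mass defect introduced is at most $\delta_\eps|\Omega| + |\{0<h\le\delta_\eps\}|\cdot\sup h = o(1)$, restorable by a rescaling or an additive bump that costs only $o(1)$ in both the energy and the $H^1$ norm. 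The lim-inf direction, by contrast, is routine once the layer-cake reformulation is in place.
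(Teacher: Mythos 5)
Your proof is correct, but both halves take routes that differ from the paper's. For the \emph{lim-inf} inequality the paper does not use Fatou's lemma and pointwise a.e.\ convergence: it restricts the integral to the level set $\{h_n>\delta_n\}$ with $\eps_n/\delta_n\to 0$, identifies the liminf with $\liminf_n\intd\chi\{h_n>\delta_n\}$, and then rules out $\liminf_n\intd\chi\{h_n>\delta_n\}<\intd\chi\{h>0\}$ by a contradiction argument combining the right-continuity of $q\mapsto\intd\chi\{h>q\}$ with Chebyshev's inequality and strong $L^2$ convergence. Your Fatou argument reaches the same conclusion more directly (and in fact you do not even need the auxiliary level $q$: on $\{h>0\}$ one has $h_\eps(x)/\eps\to\infty$ a.e.\ along the subsequence, so $\liminf\PotG(h_\eps/\eps)\ge\chi\{h>0\}$ pointwise). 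For the \emph{lim-sup} inequality the paper's recovery sequence is the affine one, $h_n=\alpha_n h+\eps_n$ with $\alpha_n=(m-\eps_n|\Omega|)/m$, which satisfies the mass constraint and the lower bound $h_n\ge\eps_n$ exactly, converges strongly in $H^1$, and gives $\PotG(h_n/\eps_n)=\PotG(1+\alpha_n h/\eps_n)\le 1$ on $\{h>0\}$ and $=\PotG(1)=0$ on $\{h=0\}$ with no further estimates. Your truncation $h_\eps=\max(h,c\eps)$ works but is more delicate: you must take $c=1$ (otherwise the contribution $\PotG(c)\,|\{h=0\}|$ does not vanish, and the stated hypotheses (i)--(ii) do not include continuity of $\PotG$ at $1$ to let you send $c\to1$ afterwards), your worry about $|\{0<h\le\delta_\eps\}|$ is then moot since $\PotG(1)=0$ there and in any case that measure tends to zero for every $\delta_\eps\to0$ by continuity from above, and the mass must be restored by a negative additive bump supported where $h$ is bounded below by a positive constant rather than by a global rescaling (which would again destroy the exact value $\PotG(1)=0$ on $\{h=0\}$). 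None of this is a fatal gap, but the paper's single affine formula disposes of all three constraints --- nonnegativity, exact mass, and the potential bound --- simultaneously.
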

\begin{proof} 
Consider an arbitrary sequence $\eps_n \to 0$ and $h\in X_m$.
\\
(i) (\textit{lim-inf condition}) 
\\
Let $\{h_n\} \subset X_m$ such that $h_n \wto h$ weakly in $H^1(\Omega)$, then
$h_n\to h$ strongly in $L^2(\Omega)$. Choose $\delta_n\to 0$ such that 
${\eps_n/\delta_n}\to 0$ as $n\to \infty$ which immediately gives 
\begin{align}\label{eq_54}
\liminf_n \intd W_{\eps_n}(h_n) 
&\ge \liminf_n\int_{\{h_n>\delta_n\}} W_{\eps_n}(h_n)\,\,
{=}\,\,\liminf_n \intd \chi\{h_n>\delta_n\} .
\end{align}
Next we want to use $\int\chi\{h>0\}\le\liminf \intd \chi\{h_n>\delta_n\}$. 
Conversely assume
\begin{align}\label{eq_55}
\liminf_n \intd \chi\{h_n>\delta_n\}<\int\chi\{h>0\}.
\end{align}
Then employing right-continuity of $s\mapsto \intd \chi\{h>s\}$ 
(see \cite{leoni2009first}, Proposition 6.1)
there must exist some $\delta,\bar\delta>0$
such that 
\begin{align*}
 0&<\liminf_n \intd \chi\{h>0\}-\chi\{h_n>\delta_n\}-\delta
  \le \liminf_n\intd\chi\{h>\bar\delta\}-\chi\{h_n>\delta_n\}
,\\
  &\le \liminf_n\intd\chi\{h>\bar\delta \mathrm{\,\,\&\,\,} h_n<\delta_n\}
  \le\left(\frac{2}{\bar\delta}\right)^2\liminf_n\intd |h-h_n|^2= 0,
\end{align*}
where we used Chebyshev's inequality. This is a contradiction and thus by the previous assertions
\begin{align*}
\liminf_n \intd W_{\eps_n}(h_n) 
\ge \intd \chi\{h>0\}.
\end{align*}
\\
(ii) (\textit{lim-sup condition}) 
\\
Define a recovery sequence by
$h_n = \alpha_n h + \eps_n$ 
where
$
\alpha_n = {(m-\eps_n|\Omega|)}/{m}
$.
Then $h_n\in X_m$ and $ h_n\to h$ even strongly in $H^1(\Omega)$ and the
following estimate holds
\begin{align*}
\limsup_n \intd \potG_n(h_n)
	&=\limsup_n\Big(
		 \int_{\{h>0\}} \PotG\left(1+\frac{\alpha_n}{\eps_n}h\right)
		  + \int_{\{h=0\}} \PotG\left(1+\frac{\alpha_n}{\eps_n}h\right)
	\Big)
\, ,
\\ 	&\le 
	\limsup_n \intd \chi\{h>0\}
	+\int_{\{h=0\}} \PotG\left(1\right)
	=
	\intd \chi\{h>0\}
\, ,
\end{align*}
where we used that $\PotG(s)\le 1$ for $s>1$ and $\PotG(1)=0$.
\end{proof}
\\

To prove the $\Gamma$-convergence to the sharp-interface model we can exploit 
the property that the behavior of gradient terms can be easily controlled.\\

\begin{theorem}
\label{thm:gammalimit}
For the family of energies \eqref{eqn:gammaenergy} the $\Gamma$-limit is 
\[
E_\infty(h_1,h_2)=
        \intd \frac{\sigma}{2}|\nabla h_1|^2 + \frac{1}{2}|\nabla
h_2|^2 + \phim\,\chi\{h_2>h_1\} 
\]
\end{theorem}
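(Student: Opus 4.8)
The plan is to deduce $E_\infty$ from Proposition~\ref{gammaconvproposition} by treating the quadratic gradient terms separately from the potential, using the stability of $\Gamma$-convergence under continuous perturbations. First I would introduce the decomposition $E_\eps(h_1,h_2) = G(h_1,h_2) + \phim\,F_\eps(h_2-h_1)$, where $G(h_1,h_2)=\intd \tfrac{\sigma}{2}|\nabla h_1|^2 + \tfrac12|\nabla h_2|^2$ and $F_\eps$ is (essentially) the functional of Proposition~\ref{gammaconvproposition} applied to $h := h_2 - h_1$ with mass $m_2$; here one must be slightly careful because in $\Hm$ the nonnegativity constraint is on $h_2-h_1$, not on $h_1$, so the relevant $X_m$-type constraint matches what Proposition~\ref{gammaconvproposition} requires. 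The point is that $G$ is weakly lower semicontinuous on $H^1(\Omega)^2$ and, being $\eps$-independent, it is automatically its own $\Gamma$-limit; combined with the fact that $F_\eps \xrightarrow{\Gamma} F_\infty$ with $F_\infty(h) = \intd \chi\{h>0\}$ in the weak $H^1$ topology, the candidate limit is $E_\infty = G + \phim\,F_\infty$, which upon substituting $h=h_2-h_1$ gives exactly the asserted formula $\intd \tfrac{\sigma}{2}|\nabla h_1|^2 + \tfrac12|\nabla h_2|^2 + \phim\,\chi\{h_2>h_1\}$.

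Next I would verify the two $\Gamma$-convergence conditions directly rather than invoking a black-box stability lemma, since the sum of $\Gamma$-limits is not the $\Gamma$-limit of the sum in general — it is only valid here because one summand is continuous along the recovery sequences. For the \emph{lim-inf inequality}: given $(h_{1,n},h_{2,n}) \wto (h_1,h_2)$ weakly in $H^1(\Omega)^2$, the gradient part satisfies $\liminf_n G(h_{1,n},h_{2,n}) \ge G(h_1,h_2)$ by weak lower semicontinuity of the Dirichlet energy, while $h_{2,n}-h_{1,n} \wto h_2-h_1$ in $H^1(\Omega)$, so the lim-inf part of Proposition~\ref{gammaconvproposition} gives $\liminf_n \phim F_{\eps_n}(h_{2,n}-h_{1,n}) \ge \phim \intd \chi\{h_2>h_1\}$. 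Adding these (a liminf of a sum dominates the sum of liminfs, which here are the needed lower bounds) yields $\liminf_n E_{\eps_n}(h_{1,n},h_{2,n}) \ge E_\infty(h_1,h_2)$.

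For the \emph{lim-sup inequality} I would take the recovery sequence from the proof of Proposition~\ref{gammaconvproposition}: setting $g := h_2 - h_1$, define $g_n = \alpha_n g + \eps_n$ with $\alpha_n = (m_2 - \eps_n|\Omega|)/m_2$, and then set, e.g., $h_{1,n} = \beta_n h_1 + \gamma_n$, $h_{2,n} = h_{1,n} + g_n$, choosing the affine corrections $\beta_n \to 1$, $\gamma_n \to 0$ so that $\intd h_{1,n} = m_1$ is preserved (this is possible since $\intd g_n = m_2$ is already arranged by the choice of $\alpha_n$); this keeps $(h_{1,n},h_{2,n}) \in \Hm$ and gives strong $H^1(\Omega)^2$ convergence to $(h_1,h_2)$. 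Strong convergence makes $G$ continuous along the sequence, so $G(h_{1,n},h_{2,n}) \to G(h_1,h_2)$, and Proposition~\ref{gammaconvproposition} supplies $\limsup_n \phim F_{\eps_n}(g_n) \le \phim \intd \chi\{h_2>h_1\}$; adding gives $\limsup_n E_{\eps_n}(h_{1,n},h_{2,n}) \le E_\infty(h_1,h_2)$, completing the proof.

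The main obstacle is essentially bookkeeping rather than analysis: one must be careful that the admissible class $\Hm$ couples $h_1$ and $h_2$ through two mass constraints and the ordering $h_2\ge h_1$, so both the lim-inf sequence and the recovery sequence must be shown to lie in $\Hm$ — in particular the recovery sequence must simultaneously respect $\intd h_{1,n}=m_1$, $\intd(h_{2,n}-h_{1,n})=m_2$, and $h_{2,n}\ge h_{1,n}$, which is why one builds $h_{2,n}$ from $h_{1,n}$ and the perturbed difference $g_n$ rather than perturbing $h_1$ and $h_2$ independently. The only genuinely substantive point, that $F_\eps$ with the $L^2$-strong / right-continuity argument $\Gamma$-converges to the characteristic-function functional, is already established in Proposition~\ref{gammaconvproposition}, so the theorem reduces to transporting that statement through the affine change of variables $h \leftrightarrow (h_1,h_2)$ and adding the harmless weakly lower semicontinuous (and strongly continuous) Dirichlet energy.
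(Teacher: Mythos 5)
Your proposal is correct and follows essentially the same route as the paper's own proof: weak lower semicontinuity of the Dirichlet terms combined with the lim-inf part of Proposition~\ref{gammaconvproposition} for the lower bound, and strong $H^1$ continuity of the Dirichlet terms along the recovery sequence of Proposition~\ref{gammaconvproposition} for the upper bound. The only difference is that you spell out the lifting of the recovery sequence for $h_2-h_1$ to a pair $(h_{1,n},h_{2,n})\in\Hm$ (which the paper leaves implicit), and that bookkeeping is handled correctly.
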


\begin{proof}
The gradient terms in $E_\eps$ are weakly lower semicontinuous with respect to weak
convergence in $H^1(\Omega)^2$. 
Together with Proposition~\ref{gammaconvproposition} this gives the
\textit{lim-inf inequality}.
On the other hand the gradient terms are continuous with respect to strong
convergence in $H^1(\Omega)^2$. Choosing the recovery sequence as in the proof of
Proposition~\ref{gammaconvproposition} 
one gets the desired \textit{lim-sup inequality}.
\end{proof}
\\

Now we want to deduce necessary conditions for minimizers of $E_\infty$. We are
especially
interested in conditions at the points where the two-phase domain
meets the one-phase domain.
One problem is that one cannot immediately compute the Euler-Lagrange equations
($L^2$-gradient) of the sharp-interface 
energy functional $E_\infty$. 
This is due to $\intd \chi\{h_2>h_1\}$ being 
only lower semicontinuous in the strong $H^1(\Omega)$ topology, but not
continuous nor differentiable.
In fact directional derivatives of this part of the energy will almost surely
be zero or infinite. 
Therefore we compute the directional derivative of $E_\infty$ in another 
topology as follows: For ease of notation introduce 
\begin{equation}
 \omega=\{x\in\Omega:h_2>h_1\}
\end{equation}
and restrictions of $h_1$ and $h_2$ to $\omega$ and $\Omega\setminus\omega$ are called 
\[
h_1:=h_1|_\omega,\quad h_2:=h_2|_\omega,\quad
h:=h_1|_{\Omega\setminus\omega}=h_2|_{\Omega\setminus\omega},
\]
with boundary condition $h_1=h_2=h$ on $\partial\omega$. We will now vary $h_1,h_2$ and $h$ but also
$\omega$. The formal calculation is
restricted to smooth $h_1,h_2,h$ and $\omega$.
Using these we can rewrite the energy using the restrictions as
\begin{align}
\label{eqn:einfrestriction}
E_\infty(h_1,h_2,h,\omega)
  &=\int_\omega  \left[
      \frac{\sigma}{2}|\nabla h_1|^2 + 
      \frac{1}{2}|\nabla h_2|^2 + \phim\right]+
    \int_{\Omega\setminus\omega} \frac{\sigma'}{2}|\nabla h|^2,
\end{align}
where we define $\sigma':=1+\sigma$. 
A perturbation of $\tau\mapsto\bigl(h_1(\tau),h_2(\tau),h(\tau),\omega(\tau)\bigr)$
can be pa\-ram\-e\-trised
using a diffeomorphism $\psi(\circ,\tau):\Omega\to\Omega$ with the property that
$\omega(\tau)=\{\psi(x,\tau):x\in\omega(0)\}$. In the same spirit define $\bar h_1(x_0,\tau)=h_1(\psi(x_0,\tau),\tau)$ as the
pullback of $h_1$ by $\psi$ and similarly $\bar{h}_2$ and $\bar{h}$. 
The boundary conditions
$\partial_\tau{\bar{h}_1}=\partial_\tau{\bar{h}_2}=\partial_\tau{\bar{h}}$ on
$\partial\omega(\tau)$ 
translate into
\begin{align}
\label{eqn:bcgammaconv}\dot{h}_1+\dot{\psi}\cdot \nabla h_1
=\dot{h}_2+\dot{\psi}\cdot \nabla h_2
=\dot{h}+\dot{\psi}\cdot \nabla h=:\dot{\xi}.
\end{align}
Here we use the notation $\dot\psi:=\partial_\tau\psi$, $\dot{h}_1:=\partial_\tau
h_1$, $\dot{h}_2:=\partial_\tau h_2$ and $\dot{h}:=\partial_\tau h$. 
Then using Reynolds transport theorem we get 
\begin{align*}
\frac{d}{d\tau}E_\infty(h_1,h_2,h,\omega)
  =&\int_\omega \left(
	  {\sigma}\nabla h_1\nabla \dot{h}_1+
	  \nabla h_2\nabla \dot{h}_2\right)
    +\int_{\Omega\setminus\omega} 
	{\sigma'}\nabla h\nabla \dot{h}
\\&\quad
    +\int_{\partial\omega} \left[\frac{\sigma}{2}|\nabla h_1|^2 +
\frac{1}{2}|\nabla h_2|^2-\frac{\sigma'}{2}|\nabla
h|^2+\phim\right](n\cdot\dot{\psi}).
\end{align*}
Applying integration-by-parts and boundary conditions \eqref{eqn:bcgammaconv}
yields in one and two spatial dimensions, i.e. $\omega\subset\Real$ and $\omega\subset\Real^2$ the 
directional derivative
\begin{align}%
\nonumber
0=&\frac{d}{d\tau}\Big(E_\infty+\lambda_2\int h_1+\lambda_1 \int
(h_2-h_1)\Big),\\
\nonumber=&-\int_\omega \bigl[\sigma \Delta h_1+\lambda_2-\lambda_1\bigr]\dot
{h}_1+\bigl[ \Delta h_2+\lambda_1\bigr]\dot
{h}_2\,\,-\int_{\Omega\setminus\omega}\bigl[\sigma' \Delta
h+\lambda_2\bigr]\dot{h}\\
\nonumber&\,\,+\int_{\partial\omega}\left[-\frac{\sigma}{2}(\nabla
h_1)^2-\frac{1}{2}(\nabla h_2)^2+\frac{\sigma'}{2}(\nabla
h)^2+\phim+\eta^2\bigl(1+\sigma-\sigma'\bigr)\right]
(n\cdot\dot\psi)\\
\label{eqn:variationofenergy2}
&\,\,+\int_{\partial\omega}\left[\sigma(n\cdot\nabla h_1)+(n\cdot\nabla
h_2)-\sigma'(n\cdot\nabla h)\right]\bigl(\dot{\xi}+(t\cdot\dot\psi)\bigr).
\end{align}%
where $\eta^2:=(t\cdot\nabla h_1)^2\equiv(t\cdot\nabla h_2)^2\equiv(t\cdot\nabla
h)^2$. 
The expressions inside square brackets have to vanish independently,
since the perturbations $(\dot{h}_1,\dot{h}_2,\dot{h},\dot{\xi},\dot{\psi})$ are
independent. 
\\

\begin{remark}
Above we added Lagrange multipliers $\lambda_1,\lambda_2$ to take care of the
mass conservation.
In one dimension there is no tangential  contribution and hence $\eta\equiv 0$,
whereas in two dimensions the contribution with $\eta^2$ vanishes due to definition 
$\sigma'=\sigma+1$. However, if $\sigma'$ could be choosen independently of $\sigma$,
there would be an extra contribution in that case.
\end{remark}

\subsection{Existence and uniqueness of solutions} 

In this part we consider the sharp interface energy derived by $\Gamma$-convergence and study its minimizers. The idea of the proof is to show that for a minimizer the support of
\begin{equation}\label{eqn:zdefine}
\zdiff:=h_2-h_1
\end{equation} 
is a ball contained in $\Omega$, on which the solutions can be computed explicitly. The minimization itself is performed with masses $\mathbf{m}=(m_1,m_2)$ held fixed. Further extensions of our proof and properties of the solutions are discussed in the end of this section.
\\
 
\begin{definition}
Let $A\subset\R^n$ a Borel set of finite Lebesgue measure, then 
the symmetric rearrangement of the set $A$ is defined by $A^*=\mathcal{B}_s(0)$ with $s$ 
such that $\mu(A)=\mu(A^*)$.
The symmetric decreasing rearrangement of
the characteristic function is $(\chi_A)^*=\chi_{A^*}$. Now let $f:\R^n\to\R$ a Borel measureable function
vanishing at infinity, then define the symmetric-decreasing rearrangement of $f$ by 
\[
 f^*(x)=\int_0^\infty \chi^*_{\{f>s\}}(x)\,{\rm d}s.
\]
\end{definition}
\\

\begin{theorem}{(Minimizer of sharp interface energy)}
\label{thm:minsharpinterf}
Let $\Omega=\mathcal{B}_R(0)$ and $X=\{(h_1,h_2)\in\Hm(\Omega):(h_1-h_2)|_{\partial\Omega}=0\}$ and energy 
\begin{align}
\label{eqn:classicalenergy}
E(h_1,h_2)
 &:=\int_{\dom} \frac{\sigma}{2} |\nabla h_1|^2 +\frac{1}{2} |\nabla h_2|^2 \ + \phim\,\chi_{\{h_2>h_1\}} \intdx.
\end{align}
Then using $\zeta(x):=\alpha (s^2-|x|^2)^+$ minimizers of $E$ 
with mass $(m_1,m_2)$ are
\begin{align}
\label{eqn:sisolform}
 h_2=\frac{\sigma}{\sigma+1}\zeta(x-x_0)+h, \qquad h_1(x)=h_2-\zeta(x-x_0),
\end{align}
with constant $x_0\in\Omega$ and $r,\alpha,h\in\mathbb{R}$. Prescribing the mass $(m_1,m_2)$ fixes $r$ and $h$, whereas
$\alpha$ is fixed by the contact angle (Neumann triangle) 
\begin{equation}
\label{eqn:contactangle}
{\sigma}(\nabla h_1)^2+{}(\nabla h_2)^2=2\phim, \qquad \text{ at }|x|=r.
\end{equation}
For large masses $m_2$ \eqref{eqn:contactangle} is not required and we get $r=R$, $x_0=0$ in \eqref{eqn:sisolform}.
\end{theorem}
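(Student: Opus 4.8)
The plan is to decouple the problem by a linear change of variables. Introduce $w:=h_2-h_1$ and $\psi:=(\sigma h_1+h_2)/(\sigma+1)$; this is a linear isomorphism with inverse $h_1=\psi-w/(\sigma+1)$, $h_2=\psi+\sigma w/(\sigma+1)$. Under it the admissible class $X$ becomes a \emph{product}: the constraint $h_2\ge h_1$ reads $w\ge 0$, the boundary condition $(h_1-h_2)|_{\partial\Omega}=0$ reads $w|_{\partial\Omega}=0$, and the two mass constraints become $\intd w=m_2$ and $\intd\psi=m_1+m_2/(\sigma+1)$, with no sign or boundary condition left on $\psi$. The weight $\sigma/(\sigma+1)$ is precisely the one for which the cross term $\nabla\psi\cdot\nabla w$ cancels, so that
\[
E(h_1,h_2)=\intd\tfrac{\sigma+1}{2}|\nabla\psi|^2+\tfrac{\sigma}{2(\sigma+1)}|\nabla w|^2+\phim\,\chi\{w>0\}.
\]
Since the two contributions are now independent, a minimizer minimizes each of them. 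The $\psi$-part $\intd\tfrac{\sigma+1}{2}|\nabla\psi|^2$, with $\intd\psi$ prescribed and $\psi$ otherwise free, is uniquely minimized by the constant $\psi\equiv h:=\bigl(m_1+m_2/(\sigma+1)\bigr)/|\Omega|$ --- this is exactly the constant $h$ in the theorem. It therefore remains to minimize $\mathcal F(w):=\intd\tfrac{\sigma}{2(\sigma+1)}|\nabla w|^2+\phim\,\chi\{w>0\}$ over $\{w\in H^1_0(\Omega):w\ge0,\ \intd w=m_2\}$.

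Existence of a minimizer of $\mathcal F$ follows from the direct method: a minimizing sequence is bounded in $H^1_0(\Omega)$ by the gradient bound together with Poincar\'e's inequality, the Dirichlet term is weakly lower semicontinuous, and $w\mapsto\intd\chi\{w>0\}$ is lower semicontinuous along sequences converging strongly in $L^2$ and a.e.\ (on $\{w>0\}$ one has $w_n>0$ eventually, whence $\chi\{w>0\}\le\liminf_n\chi\{w_n>0\}$ a.e.\ and Fatou applies, as in the lim-inf part of Proposition~\ref{gammaconvproposition}). Let $w^*$ be the symmetric-decreasing rearrangement of the zero-extension of a minimizer $w$ to $\R^n$. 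Rearrangement preserves $\intd w$ and the Lebesgue measure of $\{w>0\}$, so it leaves the potential term unchanged; by the P\'olya--Szeg\H{o} inequality it does not increase the Dirichlet term; and it cannot enlarge the support, so $w^*\in H^1_0(\mathcal B_R(0))$. Hence $w^*$ is again a minimizer, and it is radial and non-increasing.

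A radial non-increasing $w^*$ has support a ball $\mathcal B_s(0)$, $s\le R$. With the support held fixed the potential term is constant, so $w^*$ must minimize $\intd|\nabla\cdot|^2$ among $H^1_0(\mathcal B_s)$ functions of mass $m_2$; the Euler--Lagrange equation is $-\Delta w^*=\mathrm{const}$ with zero boundary data, whose solution is the paraboloid $\alpha(s^2-|x|^2)^+$ with $\alpha>0$ fixed by the mass --- automatically nonnegative and radially decreasing, hence admissible --- so we may take $w^*$ to be this paraboloid. Substituting it back, $\mathcal F$ reduces to the scalar function $g(s)=A\,s^{-(n+2)}+\phim\,|\mathcal B_1|\,s^{n}$ with $A=A(m_2,\sigma,n)>0$ proportional to $m_2^2$, which, being a sum of a strictly decreasing and a strictly increasing power, has a unique minimizer $s_0\in(0,\infty)$. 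Set $r:=\min(s_0,R)$, the droplet radius, and $\zeta:=\alpha(r^2-|x|^2)^+$. If $s_0\le R$ then $r=s_0$, and a direct computation shows $g'(r)=0$ is equivalent to $(2\alpha r)^2=\tfrac{2(\sigma+1)}{\sigma}\phim$; since $\nabla h_1=-\tfrac{1}{\sigma+1}\nabla\zeta$ and $\nabla h_2=\tfrac{\sigma}{\sigma+1}\nabla\zeta$, this is exactly the Neumann-triangle condition $\sigma(\nabla h_1)^2+(\nabla h_2)^2=2\phim$ at $|x|=r$ (in agreement with the contact angle obtained by matched asymptotics in the previous section). If instead $s_0>R$ --- which happens for $m_2$ large, as $s_0$ grows without bound with $m_2$ --- then $g$ is strictly decreasing on $(0,R]$, so $r=R$, there is no interior free boundary, the contact-angle condition is vacuous, and $\mathcal B_R(x_0)\subset\mathcal B_R(0)$ forces $x_0=0$. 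Transforming back, $h_1=h-\zeta(\cdot-x_0)/(\sigma+1)$ and $h_2=h+\sigma\zeta(\cdot-x_0)/(\sigma+1)$, which is \eqref{eqn:sisolform}; and the constraints show that $(m_1,m_2)$ fixes $h$ while $m_2$ and the contact-angle condition fix $\alpha$ and $r$.

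It remains to show uniqueness up to translation. Any minimizer $w$ of $\mathcal F$ has $\mathcal F(w)=\mathcal F(w^*)$, so equality holds in the P\'olya--Szeg\H{o} inequality; moreover $w^*$, being a radial non-increasing minimizer, must by the previous step be the paraboloid $\zeta$ of optimal radius $r$, for which $\bigl|\{\nabla\zeta=0\}\cap\{0<\zeta<\max\zeta\}\bigr|=0$. The rigidity (equality) case of the P\'olya--Szeg\H{o} inequality, due to Brothers and Ziemer, then forces $w$ to equal a translate of $\zeta$, with the translation constrained by $\mathcal B_r(x_0)\subset\mathcal B_R(0)$ (so $x_0=0$ in the large-mass case); together with $\psi\equiv h$ this gives \eqref{eqn:sisolform} uniquely up to translation. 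The main obstacle is exactly this rigidity step: the change of variables, the direct-method existence argument, and the one-parameter optimization of the radius are all routine, but upgrading ``some radial minimizer is a paraboloid'' to ``every minimizer is a translated paraboloid'' requires the equality case of the rearrangement inequality. A secondary technical point is verifying that the rearrangement stays in $H^1_0(\mathcal B_R(0))$ and that masses and supports are tracked correctly through the change of variables.
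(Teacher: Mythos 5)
Your proof is correct, and its backbone --- reduce to a scalar problem for $\zdiff=h_2-h_1$, then apply symmetric-decreasing rearrangement and P\'olya--Szeg\H{o} to force a radially decreasing profile, and finally identify the paraboloid --- is the same as the paper's. The differences are worth noting. Where the paper decouples the two heights by a Cauchy--Schwarz lower bound on the cross term $\nabla h_1\cdot\nabla h_2$, optimized over the scalar ratio $\lambda=\|\nabla h_2\|/\|\nabla \zdiff\|$ (with equality iff $\nabla h_2=\tfrac{\sigma}{\sigma+1}\nabla \zdiff$), you use the exact orthogonal substitution $\psi=(\sigma h_1+h_2)/(\sigma+1)$, $w=h_2-h_1$, under which the energy and the admissible set both split into a product; this is an identity rather than an inequality chain, and it immediately identifies the constant $h$ of the statement as the mean value of $\psi$. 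You also carry out in full the steps the paper delegates: existence by the direct method (with the lower semicontinuity of $w\mapsto\intd\chi\{w>0\}$ along a.e.\ convergent sequences), the one-parameter optimization over the droplet radius $s$, whose stationarity condition $g'(s)=0$ you verify to be exactly the Neumann-triangle relation \eqref{eqn:contactangle} --- the paper instead points to the Euler--Lagrange computation \eqref{eqn:variationofenergy2} and to the reference \cite{otto2007coarsening} --- and the uniqueness-up-to-translation step, where you correctly invoke the Brothers--Ziemer rigidity form of the equality case of P\'olya--Szeg\H{o} and check its non-degeneracy hypothesis $\bigl|\{\nabla\zeta=0\}\cap\{0<\zeta<\max\zeta\}\bigr|=0$ for the paraboloid; the paper's bare citation of \cite{lieb2001analysis} for ``equality only if already symmetric-decreasing'' glosses over exactly this hypothesis, so your version is the more careful one. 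In short: same strategy, cleaner reduction, and the deferred parts of the argument are actually supplied.
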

\\

\begin{proof}
Using similar ideas as in \cite{otto2007coarsening}, we proceed as follows:

Symmetry: For given $(h_1,h_2)\in X$ let $\zdiff=(h_2-h_1)\in H^1_0(\dom)$ as in \eqref{eqn:zdefine}, non-negative  let 
\[
 \lambda:=\frac{\|\nabla h_2\|_{\Ldom}}{\|\nabla \zdiff\|_{\Ldom}}\ge 0.
\]
Using Cauchy-Schwarz inequality in $L^2(\Omega)$ the following energy estimate holds
\begin{align*}
 E(h_1,h_2)
&\ge
  (\sigma+1) \|\nabla h_2\|^2_{\Ldom} +\sigma \|\nabla \zdiff\|^2_{\Ldom} 
      - 2\sigma \|\nabla h_2\|_{\Ldom}\ \|\nabla \zdiff\|_{\Ldom} + \phim\mu(\{\zdiff>0\}),
\\
&=\|\nabla \zdiff\|^2_{\Ldom}\left(\lambda^2+\sigma(1-\lambda)^2\right) + \phim\mu(\{\zdiff>0\}).
\end{align*}
Minimizing with respect to $\lambda$ gives the lower bound
\begin{align*}
 E(h_1,h_2)
&
\ge
\frac{\sigma}{\sigma+1}\|\nabla \zdiff\|^2_{\Ldom}
    + \phim\mu(\{\zdiff>0\})
\end{align*}
which is attained only if 
$
\lambda={\sigma}/{(\sigma+1)}
$
and $\nabla h_2$ is a multiple of $\nabla \zdiff$. Now let $\zdiff^*$ be the symmetric-decreasing rearrangement of $\zdiff$, then 
by virtue of the P\'{o}lya-Szeg\H{o} inequality
\begin{align*}
\int_{\mathbb{R}^n}\zdiff^*\intdx = \int_{\mathbb{R}^n} \zdiff\intdx, \qquad\qquad \|\nabla \zdiff^*\|_{L^2(\mathbb{R}^n)}\le\|\nabla \zdiff\|_{L^2(\mathbb{R}^n)}.
\end{align*}
We have the freedom to translate $\zdiff^*$ as long its support is contained in $\Omega$.
Equality holds only if $\zdiff$ is already symmetric-decreasing \cite{lieb2001analysis}.
Now assume that $h$ is not symmetric decreasing, or $\nabla h \neq \sigma/(\sigma+1)\nabla h_2$.
Then we can reduce the energy by defining $h^*_1$ and $h^*_2$ by
\begin{align*}
h^*_2(x)&:=\frac{\sigma}{\sigma+1} \zdiff^*(x-x_0)+h, \qquad h^*_1(x):=h^*_2(x)-{\zdiff^*}(x-x_0),
\end{align*}
and have $\nabla h^*_2=\lambda \nabla \zdiff^*$ and $\mu\{\zdiff>0\}=\mu\{\zdiff^*>0\}$ so that
\begin{align*}
E(h_1,h_2)>\frac{\sigma}{\sigma+1}\|\nabla {\zdiff^*}\|^2_{\Ldom}+\phim\mu(\{{\zdiff^*}>0\})=E(h^*_1,h^*_2).
\end{align*}
Note that by definition $\{\zdiff^*>0\}=\mathcal{B}_s(0)=:\omega^*$ with $s$ such that $\mu(\omega^*)=\mu(\zdiff>0)$. To check $\zeta(x)=\alpha (s^2-|x|^2)^+$ 
is now analogous to \cite{otto2007coarsening}. One has to solve the Euler-Lagrange equation for the first variation of $E$ given in \eqref{eqn:variationofenergy2} using
standard methods.
\end{proof}
\\

\begin{corollary}
\label{cor:extendedenergy}
Let $X$ be as before and the sharp interface energy
\begin{align}
\label{eqn:otherenergy}
E_\infty(h_1,h_2,h,\omega)
 &:=\int_{\omega} \left(\frac{\sigma}{2} |\nabla h_1|^2 +\frac{1}{2} |\nabla h_2|^2 \ + \phim\right)\!\!\intdx + \int_{\Omega\setminus\omega} \frac{\sigma'}{2}|\nabla h|^2\intdx 
\end{align}
as in \eqref{eqn:einfrestriction} for $\sigma'>0$ arbitrary.
Then the minimizers of \eqref{eqn:classicalenergy} and \eqref{eqn:otherenergy} in $X$ are identical. 
\end{corollary}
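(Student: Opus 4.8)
The plan is to rerun the proof of Theorem~\ref{thm:minsharpinterf} with $E$ from \eqref{eqn:classicalenergy} replaced by the extended energy $E_\infty$ from \eqref{eqn:otherenergy}, viewed as a functional on $X$ via $\omega=\{h_2>h_1\}$ and $h=h_1|_{\Omega\setminus\omega}=h_2|_{\Omega\setminus\omega}$. The only new ingredient is that the extra term $\int_{\Omega\setminus\omega}\tfrac{\sigma'}{2}|\nabla h|^2$ is harmless: it is non-negative, and it vanishes identically on any configuration of the form \eqref{eqn:sisolform}, where $h$ is a constant. Hence $E$ and $E_\infty$ agree on every such configuration --- on $\omega$ the integrands coincide, and on $\Omega\setminus\omega$ one has $\phim\,\chi_{\{h_2>h_1\}}=0$ --- and in particular on the minimizers of $E$ produced by Theorem~\ref{thm:minsharpinterf}. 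So it suffices to prove that $E_\infty$ obeys the same lower bound as $E$ and admits the same symmetric-rearrangement reduction; then $\inf_X E_\infty=\inf_X E$, both infima are attained, and the two minimizer sets coincide.

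For the lower bound, fix $\sigma'>0$ and $(h_1,h_2)\in X$; then $h_2-h_1\in H_0^1(\Omega)$ is non-negative with $\{h_2-h_1>0\}=\omega$, $\nabla(h_2-h_1)=0$ a.e.\ on $\Omega\setminus\omega$, and $\nabla h_1=\nabla h_2=\nabla h$ a.e.\ on $\Omega\setminus\omega$ (the gradients of $H^1$ functions agree a.e.\ on the set $\{h_1=h_2\}$ where the functions coincide). Writing $\nabla h_1=\nabla h_2-\nabla(h_2-h_1)$ on $\omega$ and using that $\nabla(h_2-h_1)$ is supported in $\omega$, one obtains the identity
\[
E_\infty(h_1,h_2)=\tfrac{\sigma+1}{2}\!\int_\omega\!|\nabla h_2|^2-\sigma\,\langle\nabla h_2,\nabla(h_2-h_1)\rangle_{L^2(\Omega)}+\tfrac{\sigma}{2}\,\|\nabla(h_2-h_1)\|_{L^2(\Omega)}^2+\phim\,\mu(\omega)+\tfrac{\sigma'}{2}\!\int_{\Omega\setminus\omega}\!|\nabla h|^2 .
\]
Dropping the last (non-negative) term, estimating $\langle\nabla h_2,\nabla(h_2-h_1)\rangle_{L^2(\Omega)}\le\bigl(\int_\omega|\nabla h_2|^2\bigr)^{1/2}\|\nabla(h_2-h_1)\|_{L^2(\Omega)}$ by Cauchy--Schwarz, and minimizing the resulting quadratic in $\bigl(\int_\omega|\nabla h_2|^2\bigr)^{1/2}$ yields
\[
E_\infty(h_1,h_2)\ \ge\ \tfrac{\sigma}{2(\sigma+1)}\,\|\nabla(h_2-h_1)\|_{L^2(\Omega)}^2+\phim\,\mu(\{h_2>h_1\}),
\]
which is exactly the lower bound used for $E$ in the proof of Theorem~\ref{thm:minsharpinterf}. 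Moreover equality here forces, in addition, $\nabla h=0$ a.e.\ on $\Omega\setminus\omega$ (the one place where $\sigma'>0$ enters) and $\nabla h_2=\tfrac{\sigma}{\sigma+1}\nabla(h_2-h_1)$ a.e.\ on $\Omega$.

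Now I would apply the rearrangement step of Theorem~\ref{thm:minsharpinterf} verbatim: replacing $h_2-h_1$ by its symmetric-decreasing rearrangement (translated so that its ball support $\omega^*$ lies in the ball $\Omega$) does not increase the $L^2$ norm of its gradient, by P\'{o}lya--Szeg\H{o}, and preserves $\int(h_2-h_1)$ and $\mu(\{h_2-h_1>0\})$. Reconstructing $(h_1^*,h_2^*)$ by \eqref{eqn:sisolform} with the additive constant fixed by $\int h_1^*=m_1$, the reconstructed $h_2^*$ has vanishing gradient outside $\omega^*$, so the $\sigma'$-term is zero and
\[
E_\infty(h_1^*,h_2^*)=\tfrac{\sigma}{2(\sigma+1)}\,\|\nabla(h_2^*-h_1^*)\|_{L^2(\Omega)}^2+\phim\,\mu(\omega^*)=E(h_1^*,h_2^*),
\]
with $E_\infty(h_1,h_2)>E_\infty(h_1^*,h_2^*)$ unless $(h_1,h_2)$ already has this special form. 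The final step --- pinning the profile to $\zeta(x)=\alpha(s^2-|x|^2)^+$ and fixing $\alpha,s$ (and $x_0$) from the masses and the contact-angle condition \eqref{eqn:contactangle} via the Euler--Lagrange equation \eqref{eqn:variationofenergy2} --- is the identical finite-dimensional problem solved in Theorem~\ref{thm:minsharpinterf}. Therefore $\inf_X E_\infty=\inf_X E$; any minimizer of $E_\infty$ is forced into the form \eqref{eqn:sisolform} and hence is a minimizer of $E$, while any minimizer of $E$ from Theorem~\ref{thm:minsharpinterf} has $E=E_\infty$ and hence minimizes $E_\infty$. So the two minimizer sets are equal.

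The only genuinely new element relative to Theorem~\ref{thm:minsharpinterf} is the bookkeeping in the second paragraph --- splitting the gradient integrals between $\omega$ and $\Omega\setminus\omega$, observing that $\nabla(h_2-h_1)$ lives only on $\omega$ so that only $\int_\omega|\nabla h_2|^2$ appears in the Cauchy--Schwarz step, and hence that the $\sigma'$-term can be discarded in the estimate yet vanishes on the optimizers. Once this is set up, the Cauchy--Schwarz minimization and the rearrangement argument are word-for-word those of Theorem~\ref{thm:minsharpinterf}, so I expect this packaging of the $\sigma'$-term to be the main (and essentially only) obstacle.
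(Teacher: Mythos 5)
Your proposal is correct and follows essentially the same route as the paper's (much terser) proof: restrict the gradient estimates to $\omega$ using that $h_2-h_1$ vanishes on $\partial\omega$, observe the $\sigma'$-term is non-negative, and note that it vanishes on the symmetrically rearranged configuration, so both energies share the same lower bound, the same optimizers, and hence the same minimizers. Your added bookkeeping (the a.e.\ identity $\nabla h_1=\nabla h_2$ on $\{h_1=h_2\}$ and the explicit split of the Cauchy--Schwarz step over $\omega$) just makes explicit what the paper leaves implicit.
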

\\

\begin{proof}
Since
we have $\zdiff=0$ on $\partial\omega$ the estimates of the previous proof are valid if the domain of 
integration is restricted to $\omega$. By construction we have 
$\|\nabla h\|_{L^2(\Omega\setminus\omega)}^2 \ge \|\nabla h^*\|_{L^2(\Omega\setminus\omega^*)}^2=0$. 
\end{proof}
\\

\begin{remark}
\label{rem:choiceofconstants}
Using the abbreviation $c=\phim(\sigma+1)/\sigma$ we can easily compute the parameters
$r$ and $\alpha$ from the previous theorem and get 
\begin{align*}
&&s_{1d}&=\left(\frac{9m_2^2}{8c}\right)^{1/4}\!\!\!,\quad
& \alpha_{1d}&=\left(\frac{2c^3}{9m_2^2}\right)^{1/4}\!\!\!,&&\\
&&s_{2d}&=\left(\frac{8m_2^2}{\pi^2c}\right)^{1/6}\!\!\!, \qquad
& \alpha_{2d}&=\left(\frac{\pi c^2}{2m_2}\right)^{1/3}\!\!\!.&&
\end{align*}
in one and two spatial dimension respectively. The contact angles are then
\begin{align}
\mathbf{n}\cdot\left.\nabla h_1 \right|_{s_-}=\pm\frac{\sqrt{2c}}{1+\sigma} && \mathbf{n}\cdot\left.\nabla h_2 \right|_{s_-}=\mp\sqrt{2c}\frac{\sigma}{1+\sigma}.
\end{align}
and which actually holds in any spatial dimension. We also have 
\begin{align}
\label{eqn:gammaconvcontangle}
\mathbf{n}\cdot\nabla(h_2-h_1)=\sqrt{2c}=\sqrt{-\frac{2(\sigma+1)}{\sigma}\Pot(1)} 
\end{align}
which can be compared with the 
appropriate boundary condition in \eqref{sharpinterace} from the matched asymptotic expansion.
\end{remark}

\section*{Discussion and Outlook}

We considered stationary solutions of systems of coupled thin-film equations for two-layer liquid films. After proving existence of stationary droplet solutions, we used matched asymptotic analysis to derive a corresponding sharp-interface model in the limit when the thickness of the ultra-thin film in the dewetted region $\eps\to 0$, which then yields the equilibrium Neumann angles. We point out that our asymptotic analysis requires the inclusion of logarithmic switch-back terms for the asymptotic droplet solution, which should in principle also be needed for the limiting case of droplet solutions on solid substrates. 

We then proved the existence and uniqueness of the sharp-interface model using  the variational structure of the equations allowing us to formulate the problem as a minimization problem, for which we can study the limit $\eps\to 0$ via  $\Gamma$-convergence. In one spatial dimension on an interval both sharp-interface models are equivalent. In particular the contact angle of $h$ from the matched asymptotic analysis in \eqref{sharpinterace} is the same as the one from the $\Gamma$-convergence in \eqref{eqn:gammaconvcontangle}. Since the recovery of $h_1, h_2$ from $h$ in both cases works via {(\ref{h1sol}, \ref{h2sol})} or \eqref{eqn:sisolform}, the second contact angle agrees as well. We note that dimensions $d>1$ one has to prove that the shape of the domain $\{h_2-h_1>0\}$ is a ball of a certain radius. Using symmetric decreasing rearrangement this property, and thereby existence and uniqueness of minimizers, could be proved. 

We expect that, as for thin films on solid substrate, the techniques of matched asymptotic analysis can  be extended to the dynamic time-dependent problem. In particular, the derivation and study of the time-dependent sharp-interface model will also support the understanding of the energetic structure of the system of the coupled thin film model and should still be valid in the time-dependent problem, e.g. in the gradient flow structure of a sharp-interface model. This will be important in the study of dewetting regimes, dewetting rates and the stability properties of the evolving interfaces, as it was done previously for the dewetting liquid films from solid substrates, see e.g. \cite{MWW06,KMW08}. 

As pointed out in the beginning of our study, mathematical theory for two-layer liquid flows leaves still many open questions and problems to be addressed. The present work can only be considered as a first step. Moreover, even considering only stationary solutions, we note that the general picture is much richer compared to the situation on a solid substrate, with energy structures leading to phase-inverted or more complicated patterns, and is be subject of our ongoing research. 

\section*{Acknowledgements}
The authors are grateful for the financial support of the DFG SPP 1506 ``Transport at fluid interfaces'' for financial support. GK thanks the Max-Planck-Institute for Mathematics in the Natural Sciences, Leipzig for the postdoctoral scholarship. DP acknowledges the financial support by DFG Research Center Matheon. The authors also enjoyed numerous fruitful discussion with Andreas M\"unch. 

\bibliographystyle{unsrtnat}
\bibliography{2layer-statsol}
\clearpage
\addtocounter{tocdepth}{2}

\end{document}